\crefname{hypothesis}{Hypothesis}{Hypotheses}
\title{A unified analysis framework for iterative parallel-in-time algorithms
\thanks{Submitted to the editors DATE.
\funding{This project has received funding from the European High-Performance Computing Joint Undertaking (JU) under grant agreement No 955701. The JU receives support from the European Union's Horizon 2020 research and innovation programme and Belgium, France, Germany, and Switzerland. This project also received funding from the German Federal Ministry of Education and Research (BMBF) grant 16HPC048.}}}
\author{
Martin J. Gander%
\thanks{University of Geneva, Switzerland.}
\and
Thibaut Lunet\thanks{%
	Hamburg University of Technology, Germany 
	(\email{thibaut.lunet@tuhh.de}).}
\and
Daniel Ruprecht\footnotemark[3]
\and
Robert Speck\thanks{Forschungszentrum J\"ulich GmbH, Germany.}
}
\newcommand{\Parareal}{\textsc{Para\-real}\xspace}
\newcommand{\PFASST}{\textsc{PFASST}\xspace}
\newcommand{\MGRIT}{\textsc{MGRIT}\xspace}
\newcommand{\RIDC}{\textsc{RIDC}\xspace}
\newcommand{\STMG}{\textsc{STMG}\xspace}
\newcommand{\G}{\mathcal{G}}
\newcommand{\F}{\mathcal{F}}
\newcommand{\norm}[1]{\left\lVert#1\right\rVert}
\newcommand{\matr}[1]{\mathbf{#1}}
\newcommand{\vect}[1]{\boldsymbol{#1}}
\newcommand{\dt}{\Delta t}
\newcommand{\Imat}{\matr{I}}
\newcommand{\Qmat}{\matr{Q}}
\newcommand{\QDelta}{\matr{Q}_\Delta}
\newcommand{\Hmat}{\matr{H}}
\newcommand{\uvect}{\vect{u}}
\newcommand{\MJac}{\matr{P}_{Jac}}
\newcommand{\MGS}{\matr{P}_{GS}}
\newcommand{\QTilde}{\matr{\tilde{Q}}}
\newcommand{\HTilde}{\matr{\tilde{H}}}
\newcommand{\MGSTilde}{\matr{\tilde{P}}_{GS}}
\newcommand{\QDeltaTilde}{\matr{\tilde{Q}}_\Delta}
\newcommand{\TFtoCBar}{\matr{\bar{T}}_F^C}
\newcommand{\TCtoFBar}{\matr{\bar{T}}_C^F}
\newcommand{\TFtoC}{\matr{T}_F^C}
\newcommand{\TCtoF}{\matr{T}_C^F}
\newcommand{\AMat}{\matr{A}}
\newcommand{\BMat}{\matr{B}}
\newcommand{\DMat}{\matr{D}}
\newcommand{\MMat}{\matr{M}}
\newcommand{\RMat}{\matr{R}}
\newcommand{\eyeMat}{\matr{I}}
\newcommand{\phiOp}{\bm{\phi}}
\newcommand{\chiOp}{\bm{\chi}}
\newcommand{\phiApprox}{\bm{\tilde{\phi}}}
\newcommand{\CoarseId}{C}
\newcommand{\MCoarse}{M^\CoarseId}
\newcommand{\uCoarse}{\vect{u}^\CoarseId}
\newcommand{\ACoarse}{\matr{A}_\CoarseId}
\newcommand{\chiCoarse}{\bm{\chi}_\CoarseId}
\newcommand{\phiCoarse}{\bm{\phi}_\CoarseId}
\newcommand{\phiApproxCoarse}{\bm{\tilde{\phi}}_\CoarseId}
\newcommand{\eg}{\textit{e.g.}~}
\newcommand{\ie}{\textit{i.e.}~}
\newcommand{\cf}{\textit{cf.}~}
\newlength\Colsep
\patchcmd{\@addmarginpar}{\ifodd\c@page}{\ifodd\c@page\@tempcnta\m@ne}{}{}
\definecolor{tibo}{RGB}{139,0,128}
\newcommand{\ones}{\mbox{1\hspace{-0.24em}I}}
\newcommand{\TMG}{\textsc{TMG}\xspace}
\newcommand{\TMGCoarse}{\textsc{TMG$_c$}\xspace}
\newcommand{\TMGFine}{\textsc{TMG$_f$}\xspace}
\newcommand{\errBnd}{\theta}
\begin{document}

\maketitle

\begin{abstract}
  Parallel-in-time integration has been the focus of intensive
  research efforts over the past two decades due to the advent of
  massively parallel computer architectures and the scaling limits of
  purely spatial parallelization.  Various iterative parallel-in-time
  (PinT) algorithms have been proposed, like
  \Parareal, \PFASST, \MGRIT, and Space-Time Multi-Grid (\STMG).
  These methods have been described using different notations, and the
  convergence estimates that are available are
  difficult to compare.  We describe \Parareal, \PFASST, \MGRIT and
  \STMG for the Dahlquist model problem using a common notation and
  give precise convergence estimates using generating functions.
  This  allows us, for the first time, to directly compare their
  convergence.
  We prove that all four methods eventually converge
  super-linearly, and also compare them numerically.
  The generating function framework provides further
  opportunities to explore and analyze existing and new methods.
\end{abstract}

\begin{keywords}
  Parallel in Time (PinT) methods, \Parareal, \PFASST, \MGRIT,
  space-time multi-grid (\STMG), generating functions, convergence
  estimates.
\end{keywords}

\begin{AMS}
  65R20, 45L05, 65L20
\end{AMS}

\tableofcontents

\section{Introduction}
\label{sec:intro}

The efficient numerical solution of time-dependent ordinary and
partial differential equations (ODEs/PDEs) has always been an
important research subject in computational science and engineering.
Nowadays, with high-performance computing platforms providing more and more
processors whose individual processing speeds are no longer increasing, the
capacity of algorithms to run concurrently becomes important.
As classical parallelization algorithms start to reach their intrinsic
efficiency limits, substantial research efforts have been invested to find
new parallelization approaches that can translate the computing power of
modern many-core high-performance computing architectures into faster
simulations.

For time-dependent problems, the idea to parallelize across the time
direction has gained renewed attention in the last two decades\footnote{
	See also \url{https://www.parallel-in-time.org}}.
Various algorithms have been developed, for overviews see the papers by
Gander~\cite{gander2015years} or Ong and Schroder~\cite{OngEtAl2020}.
Four iterative algorithms have received significant attention, namely \Parareal~\cite{lions2001parareal} (474 citat.~since 2001)\footnote{
	Number of citations since publication, according to Google Scholar in April 2023.},
the \emph{Parallel Full Approximation Scheme in Space and Time}
(\PFASST)~\cite{emmett2012toward} (254 citat.~since 2012),
\emph{Multi-Grid Reduction In Time},
(\MGRIT)~\cite{friedhoff2013multigrid,falgout2014parallel}
(287~citat.~since 2014)
and a specific form of \emph{Space-Time Multi-Grid} (STMG)~\cite{gander2016analysis} (140~citat.~since 2016).
Other algorithms have been proposed, \eg the \emph{Parallel (or \Parareal) Implicit Time integration Algorithm} PITA~\cite{FarhatEtAl2003} (275~citat.~since 2003) which is very similar to \Parareal, the diagonalization technique~\cite{maday2008parallelization} (63~citat. since 2008), \emph{Revisionist Integral Deferred Corrections} (\RIDC)~\cite{christlieb2010parallel} (114~citat.~since 2010), \textsc{ParaExp}~\cite{gander2013paraexp} (103~citat.~since 2013) or \emph{parallel Rational approximation of EXponential Integrators} (REXI)~\cite{schreiber2018beyond} (28~citat.~since 2018).

\Parareal, \PFASST, \MGRIT and \STMG have all been benchmarked for large-scale problems using large numbers of cores of high-performance computing systems~\cite{hofer2019parallel,lecouvez2016parallel,lunet2018time,SpeckEtAl2012}.
They cast the solution process in time as a large linear or nonlinear system which is solved by iterating on all time steps simultaneously.
Since parallel performance is strongly linked to the rate of convergence, understanding convergence mechanisms and obtaining reliable error bounds for these iterative PinT methods is crucial.
Individual analyses exist for \Parareal~\cite{Bal2005,gander2008nonlinear,gander2007analysis,ruprecht2018wave,staff2005stability},
 \MGRIT~\cite{dobrev2016two,hessenthaler2020multilevel,southworth2019necessary},  \PFASST~\cite{bolten2017multigrid,bolten2018asymptotic}, and \STMG~\cite{gander2016analysis}.
There are also a few combined analyses showing equivalences between \Parareal and \MGRIT
\cite{falgout2014parallel,gander2018multigrid}
or connections between \MGRIT and \PFASST~\cite{minion2015interweaving}.
However, no systematic comparison of convergence behaviour, let alone
efficiencies, between these methods exists.

There are at least three obstacles to comparing these four methods: first, there is no common formalism or notation
to describe them; second, the existing analyses use very different techniques to
obtain convergence bounds; third, the
algorithms can be applied to many different problems in different ways with many tunable parameters, all of which
affect performance~\cite{GoetschelEtAl2021}.
Our main contribution is to address, at least for the Dahlquist test problem,
the first two problems by proposing a common formalism to rigorously describe
\Parareal, \PFASST, \MGRIT\footnote{
  We do not analyze in detail \MGRIT with FCF relaxation, only with F relaxation, in which case the two level variant is equivalent
    to \Parareal. Our framework could however be extended to include FCF relaxation, see Remark~\ref{rem:FCF}.}
and the Time Multi-Grid (\TMG) component\footnote{
    Since we focus only on the time dimension, the spatial component of \STMG is left out.
} of \STMG using the same notation.
Then, we obtain comparable error bounds for all four methods by
using the Generating Function Method (GFM)~\cite{knuth1975art}.
GFM has been used to analyze \Parareal~\cite{gander2008nonlinear} and was used to relate \Parareal and \MGRIT~\cite{gander2018multigrid}.
However, our use of GFM to derive common convergence bounds across multiple
algorithms is novel,
as is the presented unified framework.
When coupled with a predictive model for computational cost, this GFM framework could eventually be extended to a model to compare parallel performance of different algorithms, but this is left for future work.

Our manuscript is organized as follows: In
  Section~\ref{sec:gfmDescription}, we introduce the GFM framework.  In
  particular, in Section~\ref{sec:mainDef}, we give three definitions (time
  block, block variable and block operator) used to build the GFM
  framework and provide some examples using classical
  time integration methods.  Section~\ref{subsec:block_iter}
  contains the central definition of a \emph{block iteration} and
  again examples. In  Section~\ref{sec:gfmErrorBound}, we state the
  main theoretical results and error bounds, and the next sections
  contain how existing algorithms from the PinT literature can be
  expressed in the GFM framework: \Parareal in Section~\ref{sec:Parareal}, \TMG in Section~\ref{sec:STMG}, and \PFASST in
   Section~\ref{sec:PFASST}.  Finally,
  we compare in Section~\ref{sec:comparison} all methods using
  the GFM framework.  Conclusions and an outlook are given in
  Section~\ref{sec:conclusion}.
\section{The Generating Function Method}\label{sec:gfmDescription}

We consider the Dahlquist equation
\begin{equation}\label{eq:dahlquist}
\frac{du}{dt} = \lambda u,
\quad \lambda \in \mathbb{C},
\quad t \in (0, T],
\quad u(0) = u_0 \in \mathbb{C}.
\end{equation}
The complex parameter $\lambda$ allows us to emulate problems of parabolic ($\lambda<0$), hyperbolic ($\lambda$ imaginary) and mixed type.

\subsection{Blocks, block variables, and block operators}
\label{sec:mainDef}

We decompose the time interval $[0, T]$ into $N$ time sub-intervals
$[t_n, t_{n+1}]$ of uniform size $\dt$ with $n \in \{0, ..., N-1\}$.
\begin{definition}[time block]\label{def:blocks}
  A \emph{time block} (or simply block) denotes the
  discretization of a time sub-interval $[t_n, t_{n+1}]$ using
  $M>0$ grid points,
  \begin{equation}\label{eq:nodes}
    \tau_{n,m} = t_n + \dt \tau_{m}, \quad m\in\{1, ..., M\},
  \end{equation}
  where the $\tau_{m} \in [0,1]$ denote normalized grid points in
  time used for all blocks.
\end{definition}
We choose the name ``block'' in order to have a generic name for the
internal steps inside each time sub-interval.  A block could be
several time steps of a classical time-stepping scheme (\eg
Runge-Kutta, \cf Section~\ref{ex:RungeKutta}), the quadrature nodes of
a collocation method (\cf Section~\ref{ex:collocation}) or a
combination of both.
But in every case, a block represents the time domain that is associated
to one computational process of the time parallelization.
A block can also collapse by setting $M:=1$ and
$\tau_1:=1$, so that we retrieve a standard uniform
time-discretization with time step $\dt$.
The additional structure provided by blocks will be useful when
describing and analyzing two-level methods which use different numbers of grid
points per block for each level, \cf Section~\ref{sec:coarseGridCorrection}.

\begin{definition}[block variable]
  A {\em block variable} is a vector
  \begin{equation}
    \uvect_n = [u_{n,1},u_{n,2},\ldots,u_{n,M}]^T,
  \end{equation}
  where $u_{n,m}$ is an approximation of $u(\tau_{n,m})$ on
  the time block for the time sub-interval $[t_n, t_{n+1}]$. For
  $M=1$, $\uvect_n$ reduces to a scalar approximation of
  $u(\tau_{n,M})\equiv u(t_{n+1})$.
\end{definition}
Some iterative PinT methods like \Parareal (see Section~\ref{sec:Parareal}) use values defined at the
interfaces between sub-intervals $[t_n, t_{n+1}]$.
Other algorithms, like \PFASST (see Section~\ref{sec:PFASST}),
update solution values in the interior of blocks.
In the first case, the block variable is the right interface value with $M=1$ and thus $\tau_{1}=1$.
In the second case, it consists of \emph{volume} values in the time block $[t_n, t_{n+1}]$ with $M>1$.
In both cases, PinT algorithms can be defined as \emph{iterative processes updating the block variables.}

\begin{remark}
  While the adjective ``time'' is natural for evolution problems, PinT
  algorithms can also be applied to recurrence relations in different contexts
  like deep learning~\cite{gunther2020layer} or
  when computing Gauss quadrature formulas~ \cite{gander2021parastieltjes}.
  Therefore, we will not systematically mention ``time'' when talking about
  blocks and block variables.
\end{remark}

\begin{definition}[block operators]
  \label{def:blockOperators}
	We denote as \emph{block operators} the two linear functions
  $\phiOp:\mathbb{C}^M\rightarrow\mathbb{C}^M$ and
  $\chiOp:\mathbb{C}^M\rightarrow\mathbb{C}^M$ for which the block
  variables of a numerical solution of \eqref{eq:dahlquist} satisfy
  \begin{equation}\label{eq:blockProblem}
    \phiOp(\uvect_1) = \chiOp(u_0\vect{\ones}),\quad
    \phiOp(\uvect_{n+1})= \chiOp(\uvect_n),\quad n=1,2,\ldots,N-1,
  \end{equation}
  with $\vect{\ones}:=[1,\dots,1]^T$.  The \emph{time integration operator}
  $\phiOp$ is bijective and $\chiOp$ is a \emph{transmission
    operator}.  The {\em time propagator} updating $\uvect_n$ to
  $\uvect_{n+1}$ is given by
  \begin{equation}\label{eq:blockSeqProp}
	\bm{\psi} := \phiOp^{-1}\chiOp.
  \end{equation}
\end{definition}
\subsubsection{Example with Runge-Kutta methods}\label{ex:RungeKutta}

Consider numerical integration of~\eqref{eq:dahlquist} with a
Runge-Kutta method with stability function
\begin{equation}
  R(z)\approx e^z.
\end{equation}
Using $\ell$ equidistant time steps per block, there are two natural
ways to write the method using block operators:
\begin{enumerate}
  \item {\em The volume formulation}: set $M:=\ell$ with $\tau_{m}:=m/M$, $m=1,\ldots,M$.
  	Setting $r:=R(\lambda\dt/\ell)^{-1}$, the block operators are the
  	$M\times M$ sparse matrices
  \begin{equation}
	\phiOp := \begin{pmatrix}
	r & \\
	-1 & 	r &\\
   & \ddots & \ddots
	\end{pmatrix},\quad
	\chiOp := \begin{pmatrix}
	0 & \dots & 0 & 1\\
	\vdots &  & \vdots & 0\\
	\vdots &  & \vdots & \vdots
	\end{pmatrix}.
	\end{equation}
	\item {\em The interface formulation}: set $M:=1$ so that
	\begin{equation}\label{eq:rungeKuttaU}
	\phiOp:=R(\lambda\dt/\ell)^{-\ell},\quad \chiOp:=1.
	\end{equation}
\end{enumerate}

\subsubsection{Example with collocation methods}\label{ex:collocation}

Collocation methods are special implicit Runge-Kutta
methods~\cite[Chap.~IV, Sec.~4]{wanner1996solving} and instrumental
when defining \PFASST in Section~\ref{sec:PFASST}.  We show their
representation with block operators.  Starting from the Picard
formulation for \eqref{eq:dahlquist} in one time sub-interval
  $[t_n, t_{n+1}]$,
\begin{equation}
  u(t) = u(t_n) + \int_{t_n}^{t} \lambda u(\tau)d\tau,
\end{equation}
we choose a quadrature rule to approximate the integral.
We consider only Lobatto or Radau-II type quadrature nodes where the last quadrature node coincides with the right sub-interval boundary.
This gives us quadrature nodes for each sub-interval that form the block discretization points $\tau_{n,m}$ of Definition~\ref{def:blocks}, with $\tau_M=1$.
We approximate the solution $u(\tau_{n,m})$ at each node by
\begin{equation}
  u_{n,m} = u_{n,0} + \lambda \dt \sum_{j=1}^{M} q_{m,j} u_{n,j}
	\quad\text{with}\quad
	q_{m,j} := \int_{0}^{\tau_m} l_j(s)ds,
\end{equation}
where $l_j$ are the Lagrange polynomials associated with the nodes
$\tau_{m}$.  Combining all the nodal values, we form the block
variable $\uvect_{n}$, which satisfies the linear system
\begin{equation}\label{eq:collocation}
	(\Imat-\Qmat) \uvect_{n} = \begin{pmatrix}
		u_{n,0} \\ \vdots \\ u_{n,0}
	\end{pmatrix} =
	\begin{bmatrix}
		0 & \dots & 0 & 1 \\
		\vdots & & \vdots & \vdots \\
		0 & \dots & 0 & 1
	\end{bmatrix}\uvect_{n-1} =: \Hmat \uvect_{n-1},
\end{equation}
with the quadrature matrix $\Qmat := \lambda\dt (q_{m,j})$, $\Imat$
the identity matrix, and $\Hmat$ sometimes called the transfer
matrix that copies the last value of the previous time block to obtain the initial value $u_{n,0}$ of the current block\footnote{This specific form of the matrix $\Hmat$ comes from the use of Lobatto or Radau-II rules, which treat the right
  interface of the time sub-interval as a node.  A similar
  description can also be obtained for Radau-I or Gauss-type quadrature rules that do
  not use the right boundary as node, but we omit it for the sake of simplicity.}.
The integration and transfer block operators from
Definition~\ref{def:blockOperators} then become\footnote{
	The notation $\Hmat$ is specific to SDC and collocation methods (see \eg \cite{bolten2017multigrid}), while the $\chiOp$ notation from the GFM framework is generic for arbitrary time integration methods.}
$\phiOp := (\Imat-\Qmat)$, $\chiOp := \Hmat$.

\subsection{Block iteration}\label{subsec:block_iter}

Having defined the block operators for our problem,
we write the numerical approximation~\eqref{eq:blockProblem} of~\eqref{eq:dahlquist} as the {\em all-at-once
global problem}
\begin{equation}\label{eq:globalProblem}
\AMat\uvect :=
\begin{pmatrix}
\phiOp & & &\\
-\chiOp & \phiOp & &\\
& \ddots & \ddots &\\
& & -\chiOp & \phiOp
\end{pmatrix}
\begin{bmatrix}
\uvect_1\\\uvect_2\\\vdots\\\uvect_N
\end{bmatrix}
=
\begin{bmatrix}
\chiOp(u_0\vect{\ones})\\0\\\vdots\\0
\end{bmatrix}
=:
 \vect{f}.
\end{equation}
Iterative PinT algorithms solve~\eqref{eq:globalProblem} by updating a vector $\uvect^k =
[\uvect^k_1, \dots, \uvect^k_N]^T$ to $\uvect^{k+1}$ until some
stopping criterion is satisfied.
If the global iteration can be written as a local update for
each block variable separately, we call the
local update formula a \emph{block iteration}.
\begin{definition}[Primary block iteration]\label{def:primaryBlockIteration}
  A {\em primary block iteration} is an updating formula for $n\geq 0$
  of the form
  \begin{equation}\label{eq:primaryBlockIteration}
	\uvect^{k+1}_{n+1} = \BMat^0_1(\uvect^k_{n+1})
		+ \BMat^1_0\left(\uvect^{k+1}_{n}\right) + \BMat^0_0\left(\uvect^{k}_{n}\right),
        \quad\uvect_{0}^k = u_0\vect{\ones} \quad\forall k \in \mathbb{N},
  \end{equation}
  where $\BMat^0_1$, $\BMat^1_0$ and $\BMat^0_0$ are linear
  operators from $\mathbb{C}^M$ to $\mathbb{C}^M$ that satisfy the
  \emph{consistency condition}\footnote{\label{rem:consistency}
  Condition \eqref{eq:primaryBlockIterCondition}
    is necessary for the block iteration to have
      the correct fixed point.}
  \begin{equation}\label{eq:primaryBlockIterCondition}
	(\BMat_1^0-\eyeMat)\bm{\psi} + \BMat^1_0 + \BMat^0_0 = 0,
  \end{equation}
  with $\bm{\psi}$ defined in~\eqref{eq:blockSeqProp}.
\end{definition}
Note that a block iteration is always associated with an all-at-once global
problem, and the primary block iteration~\eqref{eq:primaryBlockIteration} should converge to the solution of~\eqref{eq:globalProblem}.

Figure~\ref{fig:kn-base} (left)
shows a graphical representation of a primary block iteration using a
\emph{$kn$-graph} to represent the dependencies of
$\uvect^{k+1}_{n+1}$ on the other block variables.  The $x$-axis
represents the block index $n$ (time), and the $y$-axis represents the
iteration index $k$.
Arrows show dependencies from previous $n$ or
$k$ indices and can only go from left to right and/or from bottom to
top.
For the primary block iteration, we consider only
dependencies from the previous block $n$ and iterate $k$ for
$\uvect_{n+1}^{k+1}$.
\begin{figure}
    \centering
    \includegraphics[height=0.2\linewidth]{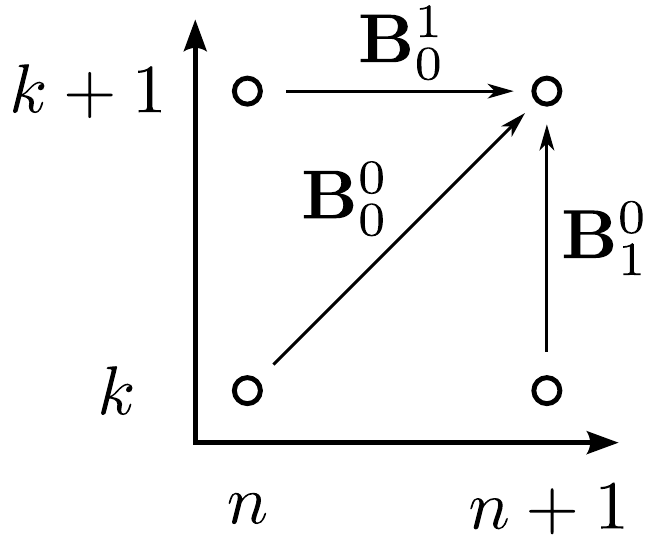}\hfil%
    \includegraphics[height=0.2\linewidth]{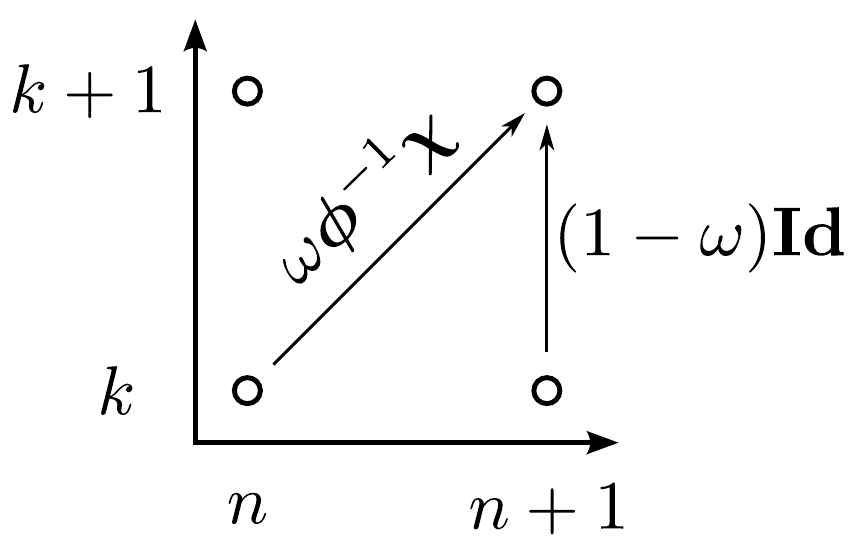}\hfil%
    \includegraphics[height=0.2\linewidth]{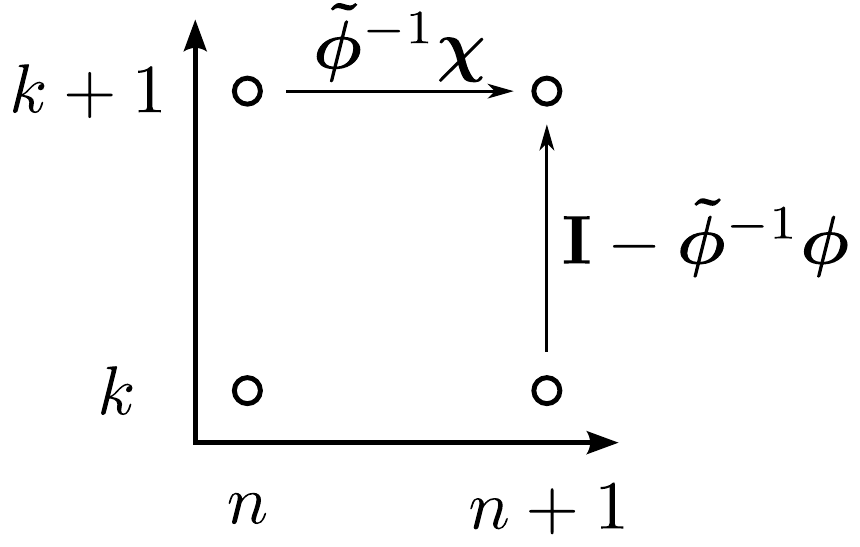}%
    \caption{$kn$-graphs for a generic Primary Block Iteration (left), damped Block Jacobi (middle) and Approximate Block Gauss-Seidel (right).}
    \label{fig:kn-base}
\end{figure}

More general block iterations can also be
considered for specific iterative PinT methods, \eg \MGRIT with
FCF-relaxation (see Remark~\ref{rem:FCF}).
Other algorithms also consist of combinations of two or more
block iterations, for example \STMG
(\cf Section~\ref{sec:STMG}) or \PFASST (\cf Section~\ref{sec:PFASST}).
But we show in those sections that we can reduce those combinations into a single primary block iteration,
hence we focus here mostly on primary block iterations to introduce our analysis framework.

We next describe the Block Jacobi relaxation (Section~\ref{ex:blockJacobi}) and the Approximate Block Gauss-Seidel iteration (Section~\ref{ex:ABGS}), which are key components used to describe iterative PinT methods.

\subsubsection{Block Jacobi relaxation}\label{ex:blockJacobi}

A damped block Jacobi iteration for the global problem~\eqref{eq:globalProblem} can be written as
\begin{equation}\label{eq:jacobiRelaxationGlobal}
    \uvect^{k+1} = \uvect^k + \omega\DMat^{-1}(\vect{f}-\AMat\uvect^k),
\end{equation}
where $\DMat$ is a block diagonal matrix constructed with the integration operator $\phiOp$, and $\omega>0$ is a
relaxation parameter.
For $n>0$, the corresponding block formulation is
\begin{equation}\label{eq:jacobiRelaxation}
    \uvect_{n+1}^{k+1} = (1-\omega)\uvect_{n+1}^k + \omega\phiOp^{-1}\chiOp\uvect_{n}^k,
\end{equation}
which is a primary block iteration with $\BMat_0^1 = 0$.
Its $kn$-graph is shown in Figure~\ref{fig:kn-base} (middle).
The consistency condition~\eqref{eq:primaryBlockIterCondition} is satisfied, since
\begin{equation}
    ((1-\omega)\eyeMat-\eyeMat)\phiOp^{-1}\chiOp + 0 + \omega\phiOp^{-1}\chiOp = 0.
\end{equation}
Note that selecting $\omega=1$ simplifies the block iteration to
\begin{equation}\label{eq:nonDampedBJ}
    \uvect_{n+1}^{k+1} = \phiOp^{-1}\chiOp\uvect_{n}^k.
\end{equation}

\subsubsection{Approximate Block Gauss-Seidel iteration}
\label{ex:ABGS}

Let us consider a Block Gauss-Seidel type preconditioned iteration
for the global problem~\eqref{eq:globalProblem},
\begin{equation}\label{eq:ABGSGlobal}
    \uvect^{k+1} = \uvect^k + \MGS^{-1}(\vect{f}-\AMat\uvect^k),\quad
    \MGS =
    \begin{bmatrix}
        \phiApprox & & \\
        - \chiOp & \phiApprox & \\
        & \ddots & \ddots
    \end{bmatrix},
\end{equation}
where the block operator $\phiApprox$ corresponds to an approximation of $\phiOp$.
This approximation can be based on time-step coarsening,
but could also use other approaches,
\eg a lower-order time integration method.
In general, $\phiApprox$ must be cheaper than $\phiOp$, but is also less accurate.
Subtracting $\uvect^k$ in \eqref{eq:ABGSGlobal} and multiplying by $\MGS$ yields
the block iteration of this \emph{Approximate Block Gauss-Seidel} (ABGS),
\begin{equation}
    \label{eq:ABGS}
    \vect{u}_{n+1}^{k+1} = \left[\eyeMat - \phiApprox^{-1} \phiOp\right] \vect{u}_{n+1}^{k} +
    \phiApprox^{-1}\chiOp\vect{u}_{n}^{k+1}.
\end{equation}
Its $kn$-graph is shown in Figure \ref{fig:kn-base} (right).
Note that a standard block Gauss-Seidel iteration
for~\eqref{eq:globalProblem} (\ie with $\phiApprox = \phiOp$)
is actually a direct solver,
the iteration converges in one
step by integrating all blocks with $\phiOp$ sequentially,
and its block iteration is simply
\begin{equation}
    \label{eq:exactBGS}
    \vect{u}_{n+1}^{k+1} =
    \phiOp^{-1}\chiOp\vect{u}_{n}^{k+1}.
\end{equation}
\subsection{Generating function and error bound for a block iteration}
\label{sec:gfmErrorBound}

Before giving a generic expression for the error bound of
the primary block iteration~\eqref{eq:primaryBlockIteration} using the GFM framework, we first need a definition
and a preliminary result.
The primary block iteration~\eqref{eq:primaryBlockIteration} is defined  for each block index $n \geq 0$, thus we can define
\begin{definition}[Generating function]\label{def:genfunc}
  The \emph{generating function} associated
  with the primary block iteration \eqref{eq:primaryBlockIteration}
  is the power series
  \begin{equation}
    \rho_k(\zeta) := \sum_{n=0}^{\infty} e^k_{n+1}\zeta^{n+1},
  \end{equation}
  where
  $
    e^k_{n+1} := \norm{\uvect^k_{n+1}-\uvect_{n+1}}
  $
  is the difference between the $k^{th}$ iterate $\uvect^k_{n+1}$ and the exact
  solution $\uvect_{n+1}$ for one block of~\eqref{eq:blockProblem} in some norm on
  $\mathbb{C}^M$.
\end{definition}
Since the analysis works in any norm, we do not specify a particular
one here.
In the numerical examples we use the $L^{\infty}$ norm on $\mathbb{C}^M$.

\begin{lemma}\label{lem:primary}
  The generating function for the primary block iteration~\eqref{eq:primaryBlockIteration} satisfies
  \begin{equation}
    \rho_{k+1}(\zeta) \leq
	\frac{\gamma +  \alpha \zeta}{1 - \beta \zeta}\rho_{k}(\zeta),
  \end{equation}
  where $\alpha := \norm{\BMat_0^0}$, $ \beta := \norm{\BMat_0^1}$, $
  \gamma := \norm{\BMat_1^0}$, and the operator norm is induced by the chosen vector norm.
\end{lemma}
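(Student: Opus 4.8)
The plan is to take the error recursion that follows directly from the primary block iteration \eqref{eq:primaryBlockIteration}, pass to norms, and then encode the resulting scalar recursion on the coefficients $e^k_{n+1}$ as an inequality between the associated power series.

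First I would subtract the fixed-point identity from \eqref{eq:primaryBlockIteration}. Since the exact block solution satisfies $\uvect_{n+1} = \bm{\psi}\uvect_n$, and using the consistency condition \eqref{eq:primaryBlockIterCondition} in the form $\BMat^1_0 + \BMat^0_0 = (\eyeMat-\BMat^0_1)\bm{\psi}$, one gets for the error $\vect{e}^k_{n} := \uvect^k_n - \uvect_n$ the exact linear recursion
\begin{equation*}
  \vect{e}^{k+1}_{n+1} = \BMat^0_1(\vect{e}^k_{n+1}) + \BMat^1_0(\vect{e}^{k+1}_{n}) + \BMat^0_0(\vect{e}^{k}_{n}),
\end{equation*}
valid for $n\geq 0$, with $\vect{e}^k_0 = 0$ for all $k$ because $\uvect^k_0 = u_0\vect{\ones}$ is exact. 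Taking the chosen vector norm, applying the triangle inequality and the definition of the induced operator norm gives the scalar inequality
\begin{equation*}
  e^{k+1}_{n+1} \leq \gamma\, e^k_{n+1} + \beta\, e^{k+1}_{n} + \alpha\, e^{k}_{n},
\end{equation*}
with $e^{k+1}_0 = 0$.

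Next I would multiply this inequality by $\zeta^{n+1}$ (thinking of $\zeta$ as a formal/nonnegative real variable so that all coefficients stay nonnegative and inequalities are preserved) and sum over $n \geq 0$. The left side is $\rho_{k+1}(\zeta)$ by definition. The term $\gamma\sum_n e^k_{n+1}\zeta^{n+1}$ is $\gamma\rho_k(\zeta)$; the term $\alpha\sum_n e^k_n\zeta^{n+1} = \alpha\zeta\sum_n e^k_n\zeta^n = \alpha\zeta\rho_k(\zeta)$ after re-indexing and using $e^k_0=0$; and the term $\beta\sum_n e^{k+1}_n\zeta^{n+1} = \beta\zeta\rho_{k+1}(\zeta)$ similarly. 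Collecting gives $\rho_{k+1}(\zeta) \leq (\gamma+\alpha\zeta)\rho_k(\zeta) + \beta\zeta\,\rho_{k+1}(\zeta)$, and solving for $\rho_{k+1}$ — legitimate provided $\beta\zeta < 1$, which I would note as the radius-of-convergence restriction on $\zeta$ — yields the claimed bound.

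The only real subtlety, and the step I would be most careful about, is justifying the manipulation of the infinite sums: one must know the series $\rho_k(\zeta)$ converges (or work purely formally with nonnegative coefficients and interpret the inequality coefficientwise) so that the re-indexing and the division by $1-\beta\zeta$ are valid. This is handled by restricting attention to $|\zeta|$ small enough, or equivalently by first proving the coefficientwise recursion and only then summing; since all $e^k_n \geq 0$ and all of $\alpha,\beta,\gamma \geq 0$, no cancellation can occur and the passage from the scalar recursion to the generating-function inequality is clean. Everything else is the routine triangle-inequality and geometric-series bookkeeping sketched above.
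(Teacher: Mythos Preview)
Your proof is correct and follows essentially the same route as the paper: derive the error recursion from the consistency condition, pass to norms to get the scalar recurrence $e^{k+1}_{n+1} \leq \gamma e^k_{n+1} + \beta e^{k+1}_n + \alpha e^k_n$, multiply by $\zeta^{n+1}$, sum over $n\ge 0$, use $e^k_0=0$, and rearrange. The paper likewise treats the series as formal power series in small $\zeta$, so your remark about justifying the manipulation is exactly in line with what they do.
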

\begin{proof}
  We start from~\eqref{eq:primaryBlockIteration} and subtract the exact solution of~\eqref{eq:blockProblem},
  \begin{equation}
    \uvect^{k+1}_{n+1} - \uvect_{n+1} = \BMat^0_1(\uvect^k_{n+1})
    + \BMat^1_0\left(\uvect^{k+1}_{n}\right) + \BMat^0_0\left(\uvect^{k}_{n}\right)
    - \bm{\psi}(\uvect_{n}).
  \end{equation}
  Using the linearity of the block operators and \eqref{eq:primaryBlockIterCondition} with $\uvect_{n}$, this simplifies to
  \begin{equation}\label{eq:errRecurrence}
    \uvect^{k+1}_{n+1} - \uvect_{n+1} = \BMat^0_1(\uvect^k_{n+1}-\uvect_{n+1})
	+ \BMat^1_0\left(\uvect^{k+1}_{n}-\uvect_{n}\right)
	+ \BMat^0_0\left(\uvect^{k}_{n}-\uvect_{n}\right).
  \end{equation}
  We apply the norm, use the triangle inequality and the operator norms defined above to get the recurrence relation
  \begin{equation}\label{eq:errRecurrenceScalar}
	e^{k+1}_{n+1} \leq \gamma e^{k}_{n+1} + \beta e^{k+1}_n + \alpha e^{k}_n
  \end{equation}
  for the error. We multiply this inequality by $\zeta^{n+1}$ and sum for $n\in \mathbb{N}$
  to get
  \begin{equation}
    \sum_{n=0}^{\infty}e^{k+1}_{n+1} \zeta^{n+1} \leq
	\gamma\sum_{n=0}^{\infty}e^{k}_{n+1} \zeta^{n+1} +
	\beta\sum_{n=0}^{\infty}e^{k+1}_{n} \zeta^{n+1} +
 	\alpha\sum_{n=0}^{\infty}e^{k}_{n} \zeta^{n+1}.
  \end{equation}
  Note that this is a formal power series expansion for $\zeta$ small in the sense of generating functions~\cite[Section~1.2.9]{knuth1975art}.
  Using Definition~\ref{def:genfunc} and that $e^k_0 = 0$ for
  all $k$ we find
  \begin{equation}
	\rho_{k+1}(\zeta) \leq \gamma\rho_{k}(\zeta) +
        \beta\zeta\sum_{n=1}^{\infty}e^{k+1}_{n} \zeta^{n} +
        \alpha\zeta\sum_{n=1}^{\infty}e^{k}_{n} \zeta^{n}.
  \end{equation}
  Shifting indices leads to
  \begin{equation}
    (1-\beta\zeta)\rho_{k+1}(\zeta) \leq (\gamma + \alpha\zeta)\rho_{k}(\zeta)
  \end{equation}
  and concludes the proof.
\end{proof}
\begin{theorem}\label{th:errBoundPBI}
  Consider the primary block
  iteration~\eqref{eq:primaryBlockIteration} and let
  \begin{equation}\label{eq:deltaDefinition}
    \delta := \underset{n = 1, \ldots, N}{\max}\norm{\uvect^0_n-\uvect_n}
  \end{equation}
  be the maximum error of the initial guess over all blocks.
  Then, using the notation of Lemma~\ref{lem:primary}, we have
  \begin{equation}
    e^k_{n+1} \leq \errBnd_{n+1}^k(\alpha, \beta, \gamma)\delta
  \end{equation}
  for $k>0$, where $\errBnd_{n+1}^k$ is a bounding function defined as follows:
  \begin{itemize}
    \item if {\em only} $\gamma=0$, then\\
	\noindent\begin{minipage}{0.9\textwidth}
	\begin{minipage}[c][2cm][c]{\dimexpr0.7\textwidth-0.5\Colsep\relax}
	\begin{equation}\label{Bound1}
  	  \errBnd_{n+1}^k = \frac{\alpha^k}{(k-1)!}
	  \sum_{i=0}^{n-k}\prod_{l=1}^{k-1}(i+l)\beta^{i};
	\end{equation}
	\end{minipage}%
	\begin{minipage}[c][2cm][c]{\dimexpr0.3\textwidth-0.5\Colsep\relax}
	  \centering\includegraphics[height=2cm]{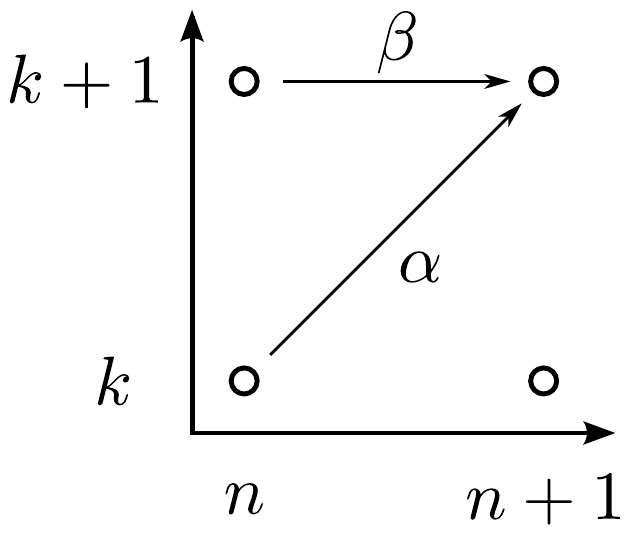}\hfil%
	\end{minipage}%
	\end{minipage}\\
    \item if {\em only} $\beta=0$, then\\
	\noindent\begin{minipage}{0.9\textwidth}
	\begin{minipage}[c][2cm][c]{\dimexpr0.7\textwidth-0.5\Colsep\relax}
  	  \begin{equation}
	    \errBnd_{n+1}^k = \begin{cases}
	    (\gamma + \alpha)^k \text{ if } k \leq n, \\
	    \displaystyle \gamma^k \sum_{i=0}^{n}
	    \binom{k}{i}\left(\frac{\alpha}{\gamma}\right)^i \text{ otherwise;}
	    \end{cases}
	  \end{equation}
	\end{minipage}%
	\begin{minipage}[c][2cm][c]{\dimexpr0.3\textwidth-0.5\Colsep\relax}
	  \centering\includegraphics[height=2cm]{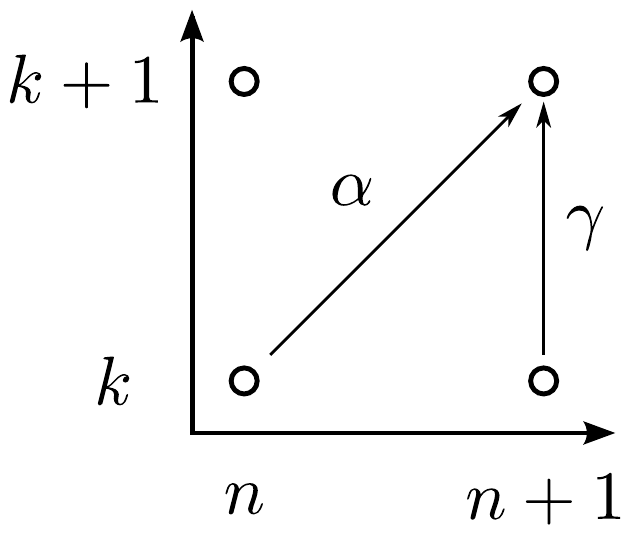}\hfil%
	\end{minipage}%
	\end{minipage}\\
    \item if {\em only} $\alpha=0$, then\\
	\noindent\begin{minipage}{0.9\textwidth}
	\begin{minipage}[c][2cm][c]{\dimexpr0.7\textwidth-0.5\Colsep\relax}
	\begin{equation}
  	  \errBnd_{n+1}^k =
  	  \frac{\gamma^k}{(k-1)!} \sum_{i=0}^{n}\prod_{l=1}^{k-1}(i+l)\beta^{i};
	\end{equation}
	\end{minipage}%
	\begin{minipage}[c][2cm][c]{\dimexpr0.3\textwidth-0.5\Colsep\relax}
	  \centering\includegraphics[height=2cm]{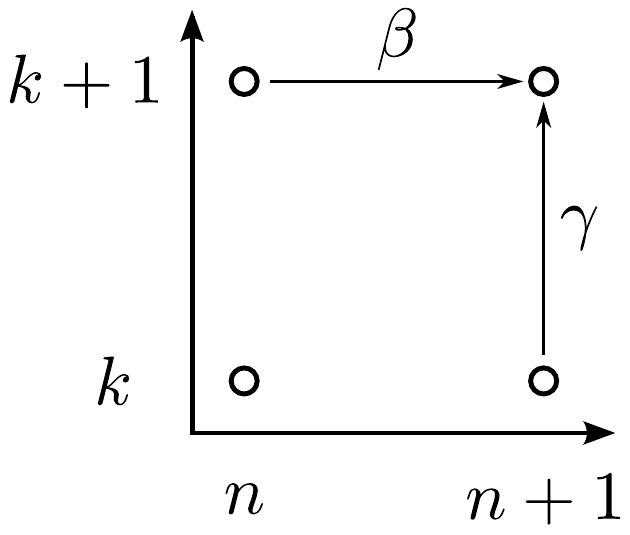}\hfil%
	\end{minipage}%
	\end{minipage}\\
    \item if {\em neither} $\alpha$, nor $\beta$, nor $\gamma$ are zero, then
	\begin{equation}\label{Bound4}
	\errBnd_{n+1}^k = \gamma^k \sum_{i=0}^{\min(n, k)} \sum_{l=0}^{n-i}
	\binom{k}{i}\binom{l+k-1}{l}
	\left(\frac{\alpha}{\gamma}\right)^i\beta^l.
	\end{equation}
  \end{itemize}
  We call any error bound obtained from one of these formulas a \emph{GFM-bound}.
\end{theorem}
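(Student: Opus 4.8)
The plan is to iterate the generating-function inequality of Lemma~\ref{lem:primary} starting from a bound on $\rho_0$, and then to read off the coefficient of $\zeta^{n+1}$ by elementary power-series manipulations, treating the four sign patterns of $(\alpha,\beta,\gamma)$ separately. The only conceptual ingredient needed throughout is that \emph{all} quantities in sight are nonnegative — the errors $e^k_n$, the operator norms $\alpha,\beta,\gamma$, and the coefficients of $1/(1-\zeta)$ and $1/(1-\beta\zeta)$ — so that the inequalities between formal power series may be interpreted coefficientwise and are preserved under multiplication by a series with nonnegative coefficients. I would state this once at the outset.

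First I would bound the initial generating function: since $e^0_{n+1}=\norm{\uvect^0_{n+1}-\uvect_{n+1}}\le\delta$ by \eqref{eq:deltaDefinition}, and since the coefficient of $\zeta^{n+1}$ in each product appearing below depends only on $e^0_1,\dots,e^0_{n+1}$, we have $\rho_0(\zeta)\le\delta\,\zeta/(1-\zeta)$ in the coefficientwise sense. Applying Lemma~\ref{lem:primary} $k$ times, and using that $\frac{\gamma+\alpha\zeta}{1-\beta\zeta}$ has nonnegative coefficients (here $\beta=\norm{\BMat^1_0}\ge 0$ is what makes $1/(1-\beta\zeta)$ legitimate), I obtain for $k\ge 1$
\begin{equation}
  \rho_k(\zeta)\;\le\;\left(\frac{\gamma+\alpha\zeta}{1-\beta\zeta}\right)^{\!k}\rho_0(\zeta)
  \;\le\;\delta\,\frac{\zeta\,(\gamma+\alpha\zeta)^k}{(1-\zeta)(1-\beta\zeta)^k}.
\end{equation}
Extracting the coefficient of $\zeta^{n+1}$ on both sides gives $e^k_{n+1}\le\errBnd^k_{n+1}\delta$ with $\errBnd^k_{n+1}=[\zeta^n]\,\dfrac{(\gamma+\alpha\zeta)^k}{(1-\zeta)(1-\beta\zeta)^k}$.

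It then remains to compute this coefficient. I would expand $(\gamma+\alpha\zeta)^k=\sum_i\binom{k}{i}\gamma^{k-i}\alpha^i\zeta^i$, use the generalized binomial series $(1-\beta\zeta)^{-k}=\sum_l\binom{l+k-1}{l}\beta^l\zeta^l$, and recall that multiplying by $1/(1-\zeta)$ replaces a coefficient sequence by its partial sums. Convolving the three factors and collecting terms yields the generic double sum \eqref{Bound4}, with the ranges $i\le\min(n,k)$ and $l\le n-i$ coming respectively from $\binom{k}{i}=0$ for $i>k$ and from the partial-sum truncation at degree $n$. For the degenerate cases I would specialize: $\gamma=0$ forces $(\gamma+\alpha\zeta)^k=\alpha^k\zeta^k$, so only partial sums of negative-binomial coefficients survive and the identity $\binom{j+k-1}{j}=\frac{1}{(k-1)!}\prod_{l=1}^{k-1}(j+l)$ turns them into \eqref{Bound1} (empty for $n<k$, as expected); $\beta=0$ removes the $(1-\beta\zeta)^{-k}$ factor, leaving partial sums of binomial coefficients, which collapse to $(\gamma+\alpha)^k$ once $k\le n$; and $\alpha=0$ removes the $(\gamma+\alpha\zeta)^k$ convolution, leaving $\gamma^k$ times partial sums of negative-binomial coefficients.

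I expect the main obstacle to be purely bookkeeping rather than conceptual: getting the summation limits in the generic case exactly right (matching the $\min(n,k)$ in \eqref{Bound4} against the three independent truncations), and verifying the factorial identity that reconciles $\binom{j+k-1}{j}$ with the product form $\frac{1}{(k-1)!}\prod_{l=1}^{k-1}(i+l)$ used in \eqref{Bound1} and in the $\alpha=0$ case. A minor but genuine point of care is the coefficientwise reading of the power-series inequalities and the restriction to $k\ge 1$ (so that $(k-1)!$ makes sense); both are handled by the nonnegativity remark made at the start.
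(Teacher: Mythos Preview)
Your proposal is correct and follows essentially the same approach as the paper: iterate Lemma~\ref{lem:primary} to bound $\rho_k(\zeta)$ by $\delta\,\zeta(\gamma+\alpha\zeta)^k/[(1-\zeta)(1-\beta\zeta)^k]$, expand via the binomial and negative-binomial series, and read off the coefficient of $\zeta^{n+1}$ using the Cauchy product. The only organizational difference is that you derive the generic double sum~\eqref{Bound4} first and then specialize, whereas the paper treats the three degenerate cases individually before the full case; your explicit remark on nonnegativity for the coefficientwise reading is a clarifying touch the paper leaves implicit.
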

The proof uses Lemma~\ref{lem:primary} to bound the generating function at $k=0$ by
\begin{equation}
\rho_0(\zeta) \leq \delta \sum_{n=0}^{\infty} \zeta^{n+1},
\end{equation}
which covers arbitrary initial guesses for defining starting
  values $\uvect_n^0$ for each block.
For specific initial guesses, $\rho_{0}(\zeta)$ can be bounded differently~\cite[Proof of Th.~1]{gander2008nonlinear}.
The error bound is then computed by coefficient identification after a power series expansion.
The rather technical proof can be found in Appendix~\ref{ap:errPrimary}.

In the numerical examples shown below, we find that the estimate
from Theorem~\ref{th:errBoundPBI} is not always sharp, \cf Section~\ref{sec:analysisBlockSDC}.
If the last time point of the blocks coincides with the right bound of the
sub-interval\footnote{
This is the case for all time-integration methods considered in this paper, even if this is not a necessary condition to use the GFM framework.},
it is helpful to define the \emph{interface error} at
the right boundary point of the $n^{th}$ block as
\begin{equation}
  \bar{e}_{n+1}^k := |\bar{u}^k_{n+1}-\bar{u}_{n+1}|,
\end{equation}
where $\bar{u}$ is the last element of the block variable $\uvect$.
We then multiply \eqref{eq:errRecurrence} by $e_M^T=[0,\dots,0,1]$
to get
\begin{equation}
  e_M^T (\uvect_{n+1}^{k+1}-\uvect_{n+1})
= \vect{b}_1^0 (\uvect_{n+1}^{k}-\uvect_{n+1}) +
\vect{b}_0^1 (\uvect_{n}^{k+1}-\uvect_{n}) +
\vect{b}_0^0 (\uvect_{n}^{k}-\uvect_{n}),
\end{equation}
where $\vect{b}_i^j$ is the last row of the block operator
$\BMat_i^j$.  Taking the absolute value on both sides, we
recognize the interface error $\bar{e}_{n+1}^{k+1}$ on the left
hand side. By neglecting the error from interior points and using the
triangle inequality, we get the approximation\footnote{
	For an interface block iteration
	($M=1, \tau_{1}=1$), \eqref{InterfaceApproximateRelation} becomes a rigorous inequality and Corollary~\ref{cor:interfaceApproximation} thus becomes an upper bound.}
\begin{equation}\label{InterfaceApproximateRelation}
  \bar{e}_{n+1}^{k+1} \lessapprox \bar{\gamma}\bar{e}_{n+1}^{k}
+ \bar{\beta}\bar{e}_{n}^{k+1}
+ \bar{\alpha}\bar{e}_{n}^{k},
\end{equation}
where $\bar{\alpha}:=|\bar{b}_0^0|$, $\bar{\beta}:=|\bar{b}_1^0|$,
$\bar{\gamma}:=|\bar{b}_0^1|$.
\begin{corollary}[Interface error approximation]
	\label{cor:interfaceApproximation}
  Defining for the initial interface error the bound
    $\bar{\delta}:=\max_{n\in\{1,\dots, N\}} \norm{\bar{u}^0_n-
      \bar{u}_n}$, we obtain for the interface error the approximation
  \begin{equation}
    \bar{e}_{n+1}^{k} \lessapprox
    \bar{\errBnd}_{n+1}^k \bar{\delta},\quad
    \bar{\errBnd}_{n+1}^k:=\errBnd_{n+1}^k(\bar{\alpha}, \bar{\beta},\bar{\gamma}),
  \end{equation}
  with $\errBnd_{n+1}^k$ defined in Theorem~\ref{th:errBoundPBI}.
\end{corollary}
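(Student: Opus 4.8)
The plan is to observe that the approximate interface recurrence~\eqref{InterfaceApproximateRelation} has exactly the same structure as the scalar error recurrence~\eqref{eq:errRecurrenceScalar} that drives the proof of Lemma~\ref{lem:primary}, under the substitutions $(\alpha,\beta,\gamma)\mapsto(\bar\alpha,\bar\beta,\bar\gamma)$ and $e^k_{n+1}\mapsto\bar e^k_{n+1}$. Hence the whole generating-function machinery behind Theorem~\ref{th:errBoundPBI} transfers essentially verbatim, and the corollary follows by re-running that argument with barred quantities.

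Concretely, I would first introduce the interface generating function $\bar\rho_k(\zeta):=\sum_{n=0}^{\infty}\bar e^k_{n+1}\zeta^{n+1}$, the obvious analogue of Definition~\ref{def:genfunc}. Because the initial block variable is prescribed exactly, $\uvect^k_0=u_0\vect{\ones}$ for all $k$, its last component satisfies $\bar u^k_0=u_0=\bar u_0$, so $\bar e^k_0=0$ for all $k$ --- exactly the property used in the proof of Lemma~\ref{lem:primary} to cancel the boundary terms. Multiplying~\eqref{InterfaceApproximateRelation} by $\zeta^{n+1}$, summing over $n\geq 0$, inserting $\bar e^k_0=0$, and shifting indices, the same steps as in the proof of Lemma~\ref{lem:primary} give
\[
  (1-\bar\beta\zeta)\,\bar\rho_{k+1}(\zeta)\lessapprox(\bar\gamma+\bar\alpha\zeta)\,\bar\rho_{k}(\zeta).
\]

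Second, the initial interface generating function is controlled by $\bar\rho_0(\zeta)\leq\bar\delta\sum_{n=0}^{\infty}\zeta^{n+1}$, since $\bar e^0_{n+1}\leq\bar\delta$ for every $n$ by definition of $\bar\delta$. Iterating the recurrence $k$ times therefore yields $\bar\rho_k(\zeta)\lessapprox\bar\delta\,\big(\tfrac{\bar\gamma+\bar\alpha\zeta}{1-\bar\beta\zeta}\big)^{k}\tfrac{\zeta}{1-\zeta}$, and expanding this rational function in powers of $\zeta$ and identifying the coefficient of $\zeta^{n+1}$ produces precisely $\bar\delta\,\errBnd^k_{n+1}(\bar\alpha,\bar\beta,\bar\gamma)$; this coefficient extraction is the same case analysis --- according to which of $\bar\alpha,\bar\beta,\bar\gamma$ vanish --- that is carried out for Theorem~\ref{th:errBoundPBI} in Appendix~\ref{ap:errPrimary}, only with barred symbols. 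Comparing coefficients of $\zeta^{n+1}$ then gives the claimed estimate $\bar e^k_{n+1}\lessapprox\bar{\errBnd}_{n+1}^k\,\bar\delta$ with $\bar{\errBnd}_{n+1}^k=\errBnd_{n+1}^k(\bar\alpha,\bar\beta,\bar\gamma)$.

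The only genuinely new point --- and the one requiring care --- is the meaning of $\lessapprox$. The relation~\eqref{InterfaceApproximateRelation} is obtained from the exact vector recurrence~\eqref{eq:errRecurrence} by discarding the contributions of the interior block components to the last row, so it is an approximation, not a rigorous inequality, unless $M=1$ and $\tau_1=1$ --- in which case $\bar e^k_{n+1}=e^k_{n+1}$ and the statement is literally Theorem~\ref{th:errBoundPBI}. I would therefore emphasise that every operation used above (multiplying by $\zeta^{n+1}\geq 0$, summing nonnegative series, iterating a recurrence with nonnegative coefficients, extracting nonnegative power-series coefficients) is monotone and term-by-term, so it propagates the heuristic ``$\lessapprox$'' of~\eqref{InterfaceApproximateRelation} without introducing any further approximation and without flipping its direction. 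The main obstacle is thus purely this sign/monotonicity bookkeeping rather than any hard estimate, since all the analytic content is inherited from Lemma~\ref{lem:primary} and Theorem~\ref{th:errBoundPBI}.
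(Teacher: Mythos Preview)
Your proposal is correct and follows exactly the approach the paper takes: the paper's proof simply states that ``the result follows as in the proof of Lemma~\ref{lem:primary} using approximate relations,'' which is precisely the re-running of the generating-function argument with barred quantities that you spell out in detail. Your additional remarks on $\bar e^k_0=0$ and on the monotonicity bookkeeping for the $\lessapprox$ relation are consistent elaborations of what the paper leaves implicit.
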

\begin{proof}
  The result follows as in the proof of Lemma~\ref{lem:primary}
    using approximate relations.
\end{proof}%
\begin{remark}\label{rem:interfaceVSblock}
    For the general case, the error at the interface $\bar{e}_{n+1}^{k+1}$ \emph{is not the same as}
    the error for the whole block $e_{n+1}^{k+1}$ .
    Only a block discretization using a single point ($M=1$) makes the two values identical.
    Furthermore, Corollary~\ref{cor:interfaceApproximation} is generally not an upper bound, but an approximation thereof.

\end{remark}

\section{Writing \Parareal and \MGRIT as block iterations}\label{sec:Parareal}

\subsection{Description of the algorithm}\label{sec:descrParareal}

 The \Parareal algorithm introduced by Lions et
  al.~\cite{lions2001parareal} corresponds to a block iteration update
  with scalar blocks ($M=1$), and its convergence was analyzed in
  \cite{gander2007analysis,ruprecht2018wave}.  We propose here a new
  description of \Parareal in the scope of the GFM framework, which
  states that \Parareal is simply a combination of two preconditioned
  iterations applied to the global problem \eqref{eq:globalProblem},
  namely one Block Jacobi relaxation without damping
  (Section~\ref{ex:blockJacobi}),
  followed by an ABGS iteration (Section~\ref{ex:ABGS}).

We denote by $\uvect^{k+1/2}$ the intermediate solution after the Block Jacobi step.
Using \eqref{eq:nonDampedBJ} and \eqref{eq:ABGS}, the two successive
primary block iteration steps are
\begin{gather}
    \uvect_{n+1}^{k+1/2} = \phiOp^{-1}\chiOp\uvect_{n}^k, \\
    \uvect_{n+1}^{k+1} = \left[\eyeMat - \phiApprox^{-1} \phiOp\right] \vect{u}_{n+1}^{k+1/2} +
    \phiApprox^{-1}\chiOp\vect{u}_{n}^{k+1}.
\end{gather}
Combining both yields the primary block iteration
\begin{equation}\label{eq:pararealBlock}
    \uvect_{n+1}^{k+1} = \left[\phiOp^{-1}\chiOp - \phiApprox^{-1}\chiOp\right]\uvect_{n}^k
    + \phiApprox^{-1}\chiOp\vect{u}_{n}^{k+1}.
\end{equation}
Now as stated in Section~\ref{ex:ABGS}, $\phiApprox$ is an
approximation of the integration operator $\phiOp$, that is
cheaper to invert but less accurate\footnote{ In the original
    paper \cite{lions2001parareal},
    this approximation is done using larger time-steps, but
    many other types of approximations have been used since then in the
    literature.}.  In other words, if we define
\begin{equation}
    \F := \phiOp^{-1}\chiOp, \quad \G := \phiApprox^{-1}\chiOp,
\end{equation}
to be a fine and coarse propagator on one block, then \eqref{eq:pararealBlock}
becomes
\begin{equation}\label{eq:pararealClassic}
    \uvect_{n+1}^{k+1} =
    \F \uvect_{n}^k + \G \uvect_{n}^{k+1} - \G \uvect_{n}^k,
\end{equation}
which is the \Parareal update formula derived from the approximate Newton update in the multiple shooting approximation in \cite{gander2007analysis}.
Iteration \eqref{eq:pararealClassic} is a primary block iteration in the sense of
Definition~\ref{def:primaryBlockIteration} with $\BMat^0_1:=0$,
$\BMat^1_0:=\mathcal{G}$ and $\BMat^0_0:=\mathcal{F}-\mathcal{G}$.
\begin{figure}
    \centering
    \includegraphics[height=0.2\linewidth]{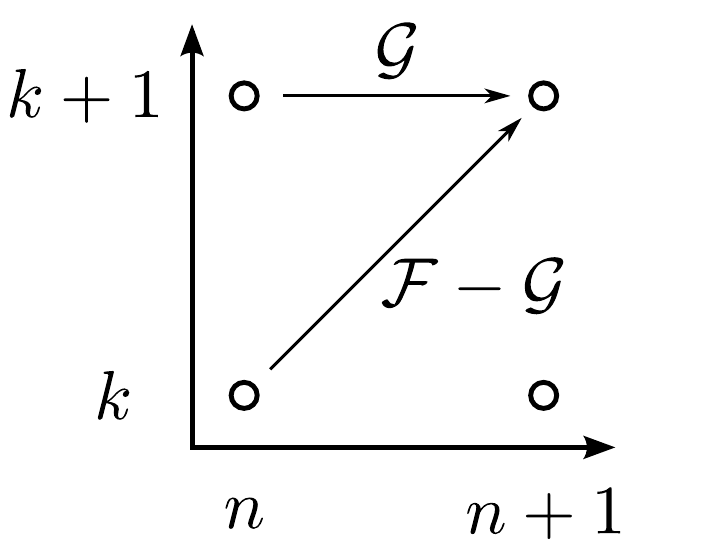}\hfil%
    \includegraphics[height=0.2\linewidth]{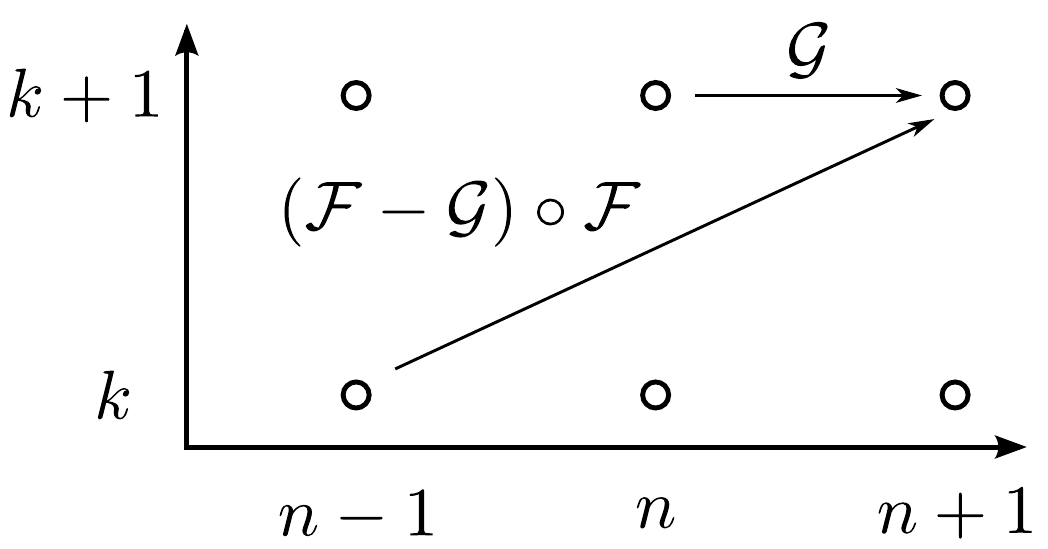}\hfil%
    \caption{$kn$-graphs for \Parareal/MGRIT with F-relaxation (left) and \MGRIT with FCF-relaxation/\Parareal with overlap (right).}
    \label{fig:kn-parareal}
\end{figure}
Its $kn$-graph is shown in Figure~\ref{fig:kn-parareal} (left).
The consistency condition~\eqref{eq:primaryBlockIterCondition} is satisfied, since
$
(0 - \eyeMat)\mathcal{F} + \mathcal{G} + (\mathcal{F}-\mathcal{G}) = 0.
$
If we subtract $\uvect_{n+1}^k$ in~\eqref{eq:pararealBlock}, multiply both sides by $\phiOp$ and rearrange terms, we can write \Parareal as the preconditioned fixed point iteration
\begin{equation}\label{eq:pararealGlobal}
    \uvect^{k+1} = \uvect^k + \MMat^{-1}(\vect{f}-\AMat\uvect^k),\quad
    \MMat:= \begin{pmatrix}
        \phiOp& &\\
        -\phiOp\phiApprox^{-1}\chiOp & \phiOp &\\
        & \ddots & \ddots
    \end{pmatrix},
\end{equation}
with iteration matrix $\RMat_{\Parareal} = \eyeMat - \MMat^{-1}\AMat$.

\begin{remark}\label{rem:FCF}
It is known in the literature that \Parareal is equivalent to a
  two-level \MGRIT algorithm with F-relaxation
  \cite{falgout2014parallel,gander2018multigrid,southworth2019necessary}.
  In \MGRIT, one however also often uses FCF-relaxation, which is a
  combination of \emph{two} non-damped ($\omega=1$) Block Jacobi
  relaxation steps, followed by an ABGS step: denoting by
  $\uvect^{k+1/3}$ and $\uvect^{k+2/3}$ the intermediary block Jacobi
  iterations, we obtain
\begin{gather}
    \uvect_{n+1}^{k+1/3} = \phiOp^{-1}\chiOp\uvect_{n}^k, \\
    \uvect_{n+1}^{k+2/3} = \phiOp^{-1}\chiOp\uvect_{n}^{k+1/3}, \\
    \uvect_{n+1}^{k+1} = \left[\eyeMat - \phiApprox^{-1} \phiOp\right] \vect{u}_{n+1}^{k+2/3} +
    \phiApprox^{-1}\chiOp\vect{u}_{n}^{k+1}.
\end{gather}
Shifting the $n$ index in the first Block Jacobi iteration, combining
all of them and re-using the $\mathcal{F}$ and $\mathcal{G}$ notation
then gives
\begin{equation}\label{eq:blockIterPararealOverlap}
    \uvect^{k+1}_{n+1} =
        \BMat^0_{-1}(\uvect^k_{n-1})
        + \BMat^1_0\left(\uvect^{k+1}_{n}\right),
        \quad \BMat^0_{-1} = (\mathcal{F} - \mathcal{G})\mathcal{F},\;
        \BMat^1_{0} = \mathcal{G},
\end{equation}
which is the update formula of \Parareal with overlap,
shown to be equivalent to \MGRIT with FCF-relaxation \cite[Th.~4]{gander2018multigrid}\footnote{
    It was shown in \cite{gander2018multigrid} that \MGRIT with (FC)$^{\nu}$F-relaxation,
    where $\nu > 0$ is the number of additional FC-relaxations, is equivalent to an overlapping version of \Parareal
    with $\nu$ overlaps.
    Generalizing our computations shows that those algorithms are equivalent to $(\nu-1)$ non-damped
    Block Jacobi iterations followed by an ABGS step.}.

This block iteration, whose $kn$-graph is represented in Figure~\ref{fig:kn-parareal} (right),
does not only link two successive block variables with time index $n+1$ and $n$, but also uses a block with time index $n-1$.
It is not a primary block iteration in the sense of Definition~\ref{def:primaryBlockIteration} anymore.
Although it can be analyzed using generating functions~\cite[Th.~6]{gander2018multigrid}, we focus on primary block iterations here and leave more complex block iterations like this one for future work.
\end{remark}

\subsection{Convergence analysis with GFM bounds}\label{sec:analysisParareal}

In their convergence analysis of \Parareal for non-linear problems~\cite{gander2008nonlinear}, the authors
obtain a double recurrence of the form $e_{n+1}^{k+1} \leq \alpha e_{n}^k + \beta e_{n}^{k+1}$, where $\alpha$ and $\beta$ come from Lipschitz constants and local truncation error bounds.
Using the same notation as in Section~\ref{sec:descrParareal}, with $\alpha=\norm{\F-\G}$ and $\beta =
\norm{\G}$, we find~\cite[Th.~1]{gander2008nonlinear} that
\begin{equation}\label{eq:pararealBoundTrick}
	e_{n+1}^k \leq \delta \frac{\alpha^k}{k!}
	\bar{\beta}^{n-k}\prod_{l=1}^{k}(n+1-l), \quad
	\bar{\beta} = \max(1, \beta).
\end{equation}
This is different from the GFM bound
\begin{equation}\label{eq:pararealBoundBrutal}
  e_{n+1}^{k} \leq
	\delta \frac{\alpha^k}{(k-1)!}
	\sum_{i=0}^{n-k}\prod_{l=1}^{k-1}(i+l)\beta^{i}
\end{equation}
we get when applying Theorem~\ref{th:errBoundPBI} with $\gamma=0$ to
the block iteration of \Parareal.  The difference stems from an
approximation in the proof of~\cite[Th.~1]{gander2008nonlinear} which
leads to the simpler and more explicit bound
in~\eqref{eq:pararealBoundTrick}.  The two bounds are equal when
$\beta=1$, but for $\beta \neq 1$, the GFM bound
in \eqref{eq:pararealBoundBrutal} is sharper.  To illustrate this, we
use the interface formulation of Section~\ref{ex:RungeKutta}: we
set $M:=1$, $\tau_{1}:=1$ and use the block operators
\begin{equation}
	\phiOp := R(\lambda\dt/\ell)^{-\ell}, \quad \chiOp := 1, \quad
	\phiApprox := R_\Delta(\lambda\dt/\ell_\Delta)^{-\ell_\Delta}.
\end{equation}
We solve~\eqref{eq:dahlquist} for $\lambda\in\{i, -1\}$ with
$t\in[0,2\pi]$ and $u_0=1$,
using $N:=10$ blocks, $\ell:=10$ fine time steps per block, the standard 4$^{th}$-order Runge-Kutta method for $\phiOp$
and $\ell_\Delta=2$ coarse time steps per block with Backward Euler
for $\phiApprox$.
Figure~\ref{fig:Parareal} shows the resulting
error (dashed line) at the last time point, the original error
bound~\eqref{eq:pararealBoundTrick}, and the new
bound~\eqref{eq:pararealBoundBrutal}.
\begin{figure}
  \centering
  \includegraphics[width=0.49\linewidth]{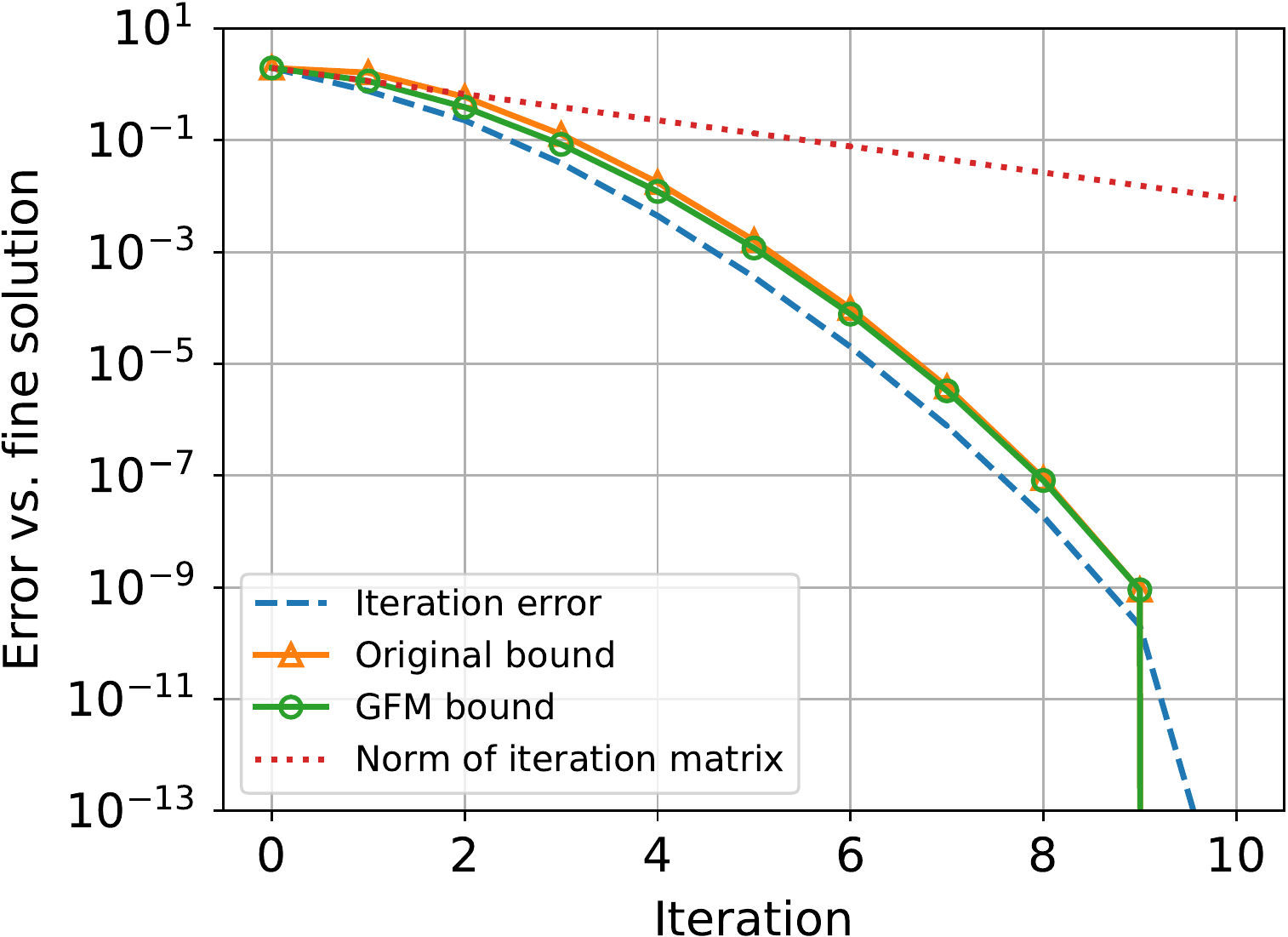}\hfill
  \includegraphics[width=0.49\linewidth]{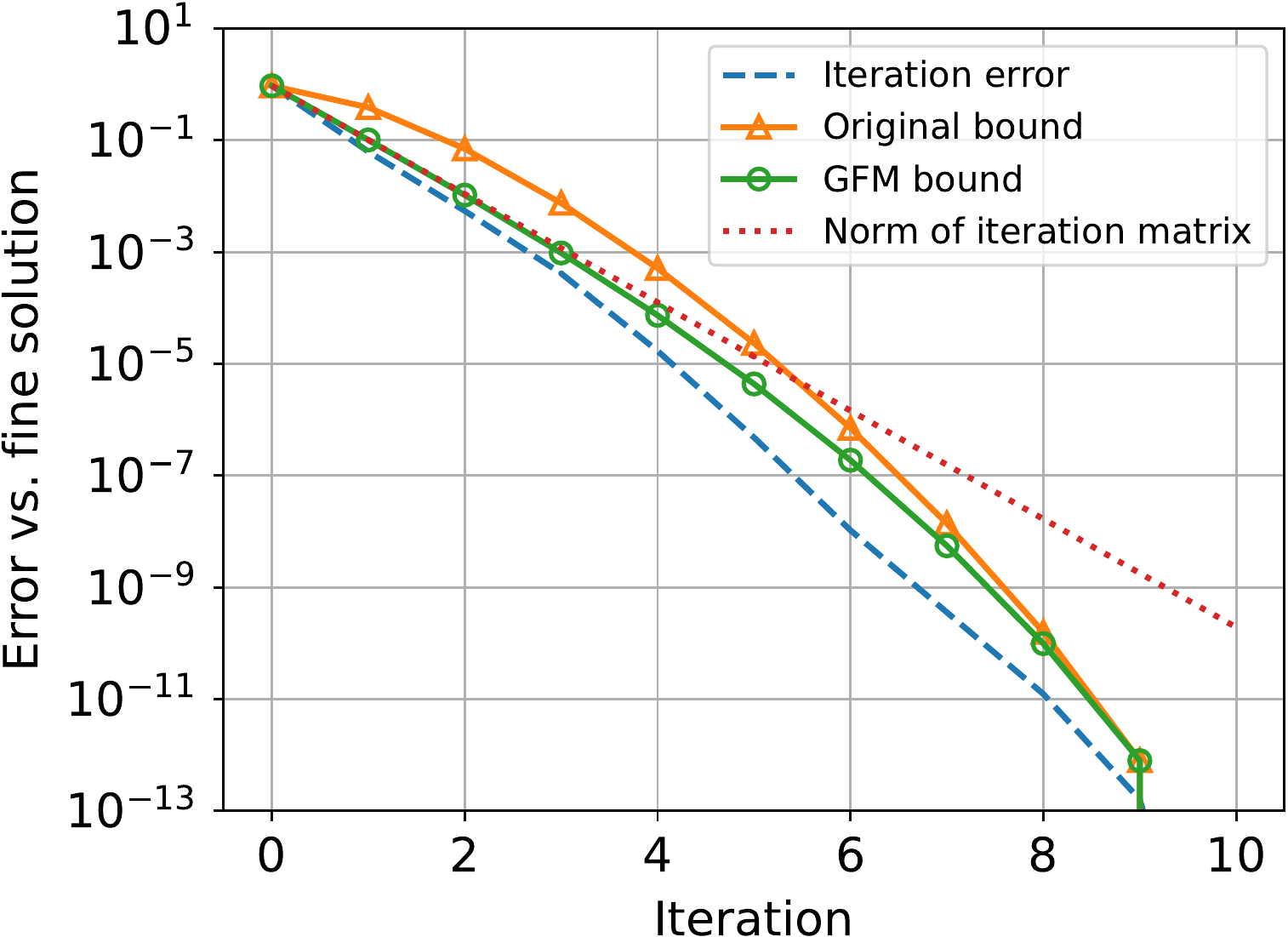}%
  \caption{Error bounds for \Parareal for~\eqref{eq:dahlquist}. Left: $\lambda=i$,
    right: $\lambda=-1$.
    Note that for $\lambda=i$, the GFM-bound and the original one are almost identical.}
  \label{fig:Parareal}
\end{figure}
We also plot the linear bound obtained from the $L^{\infty}$ norm of
the iteration matrix $\RMat_{\Parareal}$ defined just after
\eqref{eq:pararealGlobal}.  For both values of $\lambda$, the
GFM-bounds coincide with the linear bound from $\RMat_{\Parareal}$ for
the first iteration, and the GFM-bound captures the super-linear
contraction in later iterations.  For $\lambda=i$, the old and new
bounds are similar since $\beta$ is close to 1.  However, for
$\lambda=-1$ where $\beta$ is smaller than one, the new bound gives a
sharper estimate of the error, and we can also see that the new bound
captures well the transition from the linear to the super-linear
convergence regime.  On the left in Figure~\ref{fig:Parareal},
\Parareal~seems to converge well for imaginary $\lambda=i$.
This, however, should not be seen as a working example of
\Parareal for a hyperbolic type problem,
but is rather the effect of the relatively good accuracy of the coarse solver using 20 points per wave length for one wavelength present in
    the solution time interval we consider.
Denoting by $\epsilon_\Delta$ the $L_\infty$ error with respect to the exact solution,
the accuracy of the coarse solver ($\epsilon_\Delta=$ 6.22e-01)
allows the \Parareal error to reach the fine solver
error ($\epsilon_\Delta=$ 8.16e-07) in $K=8$ iterations.
Since the ideal parallel speedup of \Parareal,
neglecting the coarse solver cost, is bounded by $N/K=1.25$
\cite[Sec.~4]{aubanel2011scheduling},
this indicates however almost no speedup in practical applications (see also \cite{GoetschelEtAl2021}).
If we increase the coarse solver error,
for instance by multiplying $\lambda$ by a factor $4$
to have now four times more wavelength in the domain, and only
12.5 points per wavelength resolution in the coarse solver, the
convergence of \Parareal deteriorates massively, as we can see in
Figure~\ref{fig:Parareal-2} (left), while this is not the case for
the purely negative real fourfold $\lambda=-4$.

This illustrates how Parareal has great convergence difficulties for hyperbolic problems,
already well-documented in the literature see \eg \cite{gander2008analysis2,gander2020toward}.
This is analogous to the difficulties due to the pollution error
and damping in multi-grid methods when solving medium to high
frequency associated time harmonic problems,
see \cite{elman2001multigrid,ernst2012difficult, ernst2013multigrid,gander2015applying,cocquet2017large}
and references therein.
\begin{figure}
	\centering
	\includegraphics[width=0.49\linewidth]{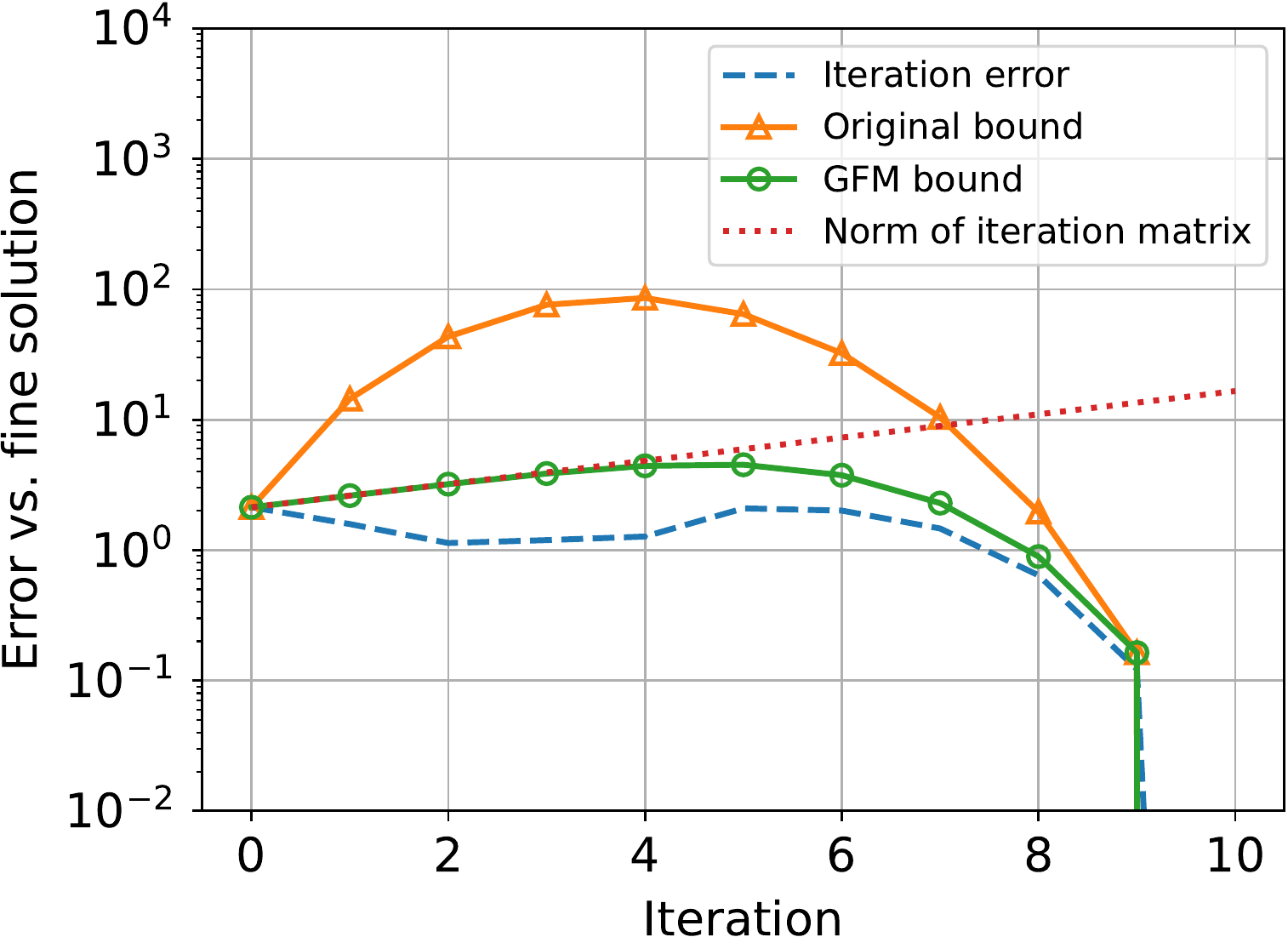}\hfill
	\includegraphics[width=0.49\linewidth]{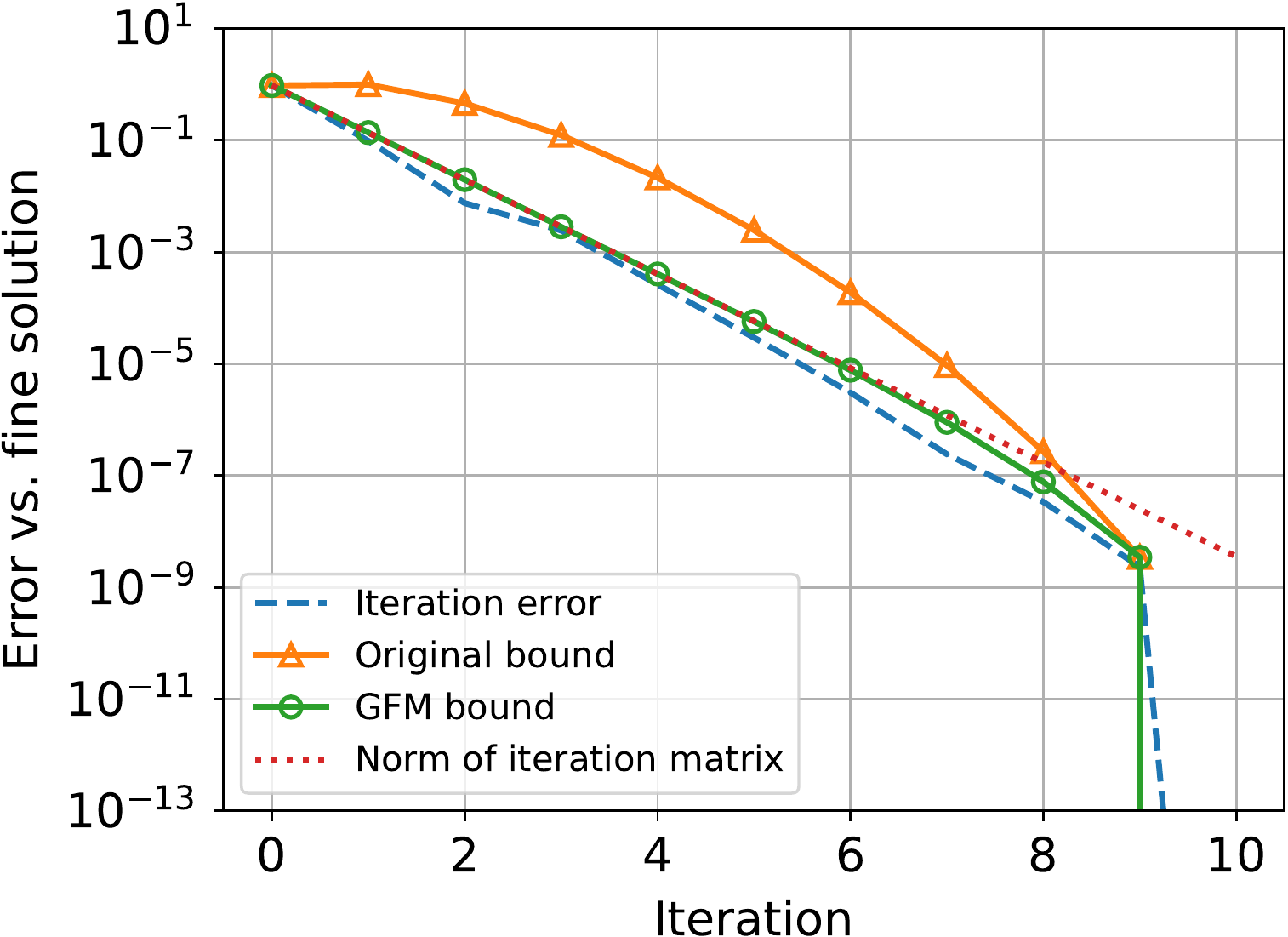}%
	\caption{Error bounds for \Parareal for~\eqref{eq:dahlquist}.  Left: $\lambda=4i$,
		right: $\lambda=-4$.}
	\label{fig:Parareal-2}
\end{figure}

\section{Writing two-level Time Multi-Grid as a block iteration}\label{sec:STMG}

The idea of time multi-grid (\TMG) goes back to the 1980s and
1990s~\cite{burmeister1991time,hackbusch1984parabolic,murata1991parabolic}.
Furthermore, not long after \Parareal was introduced, it was shown to be
equivalent to a time multi-grid method, independently of the type of
approximation used for the coarse solver~\cite{gander2007analysis}.
This inspired the development of other time multi-level
methods, in particular \MGRIT~\cite{falgout2014parallel}.
However, \Parareal and \MGRIT are usually viewed as iterations acting on
values located at the block interface,
while \TMG-based algorithms, in particular \STMG~\cite{gander2016analysis},
use an iteration updating volume values
(\ie all fine time points in the time domain).
In this section, we focus on a generic description of \TMG, and show how to
write its two-level form applied to the Dahlquist problem as block iteration.
In particular, we will show in Section~\ref{sec:PFASST} that \PFASST can be
expressed as a specific variant of \TMG.
The extension of this analysis to more levels and comparison with multi-level
\MGRIT is left for future work.

\subsection{Definition of a coarse block problem for Time Multi-Grid}\label{sec:coarseProblem}

To build a coarse problem, we consider a coarsened version of the global
problem~\eqref{eq:globalProblem},
with a $\ACoarse$ matrix having $N\cdot \MCoarse$ rows instead
of $N\cdot M$ for $\AMat$.
For each of the $N$ blocks, let $(\tau^C_{m})_{1\leq m\leq\MCoarse}$ be
the normalized $\MCoarse$ grid points\footnote{
    Those do not need to be a subset
    of the fine block grid points, although they usually are in
    applications.}
of a \emph{coarser} block discretization, with $\MCoarse < M$.

We can define a coarse block operator
$\phiCoarse$ by using the same time integration method as for
$\phiOp$ on every block, but with fewer time points.
This is equivalent to geometric coarsening used for $h$-multigrid (or
geometric multigrid \cite{trottenberg2001multigrid}), \eg when using one
time-step of a Runge-Kutta method between each time grid point.
It can also be equivalent to spectral coarsening used for $p$-multigrid
(or spectral element multigrid \cite{ronquist1987spectral}),
\eg when one step of a collocation method on $M$ points is used within each
block (as for \PFASST, see Section \ref{sec:PFASST-descr}).

We also consider the associated transmission operator $\chiCoarse$,
and denote by $\uCoarse_n$ the block variable on this coarse time
block, which satisfies
\begin{equation}
    \phiCoarse(\uCoarse_1) = \chiCoarse\TFtoC(u_0\vect{\ones}),\quad
    \phiCoarse\uCoarse_{n+1} = \chiCoarse\uCoarse_{n}
        \quad n = 1, 2, \dots, N-1.
\end{equation}
Let $\uCoarse$ be the global coarse variable that solves
\begin{equation}\label{eq:globalCoarseProblem}
    \ACoarse\uCoarse :=
    \begin{pmatrix}
        \phiCoarse & & &\\
        -\chiCoarse & \phiCoarse & &\\
        & \ddots & \ddots &\\
        & & -\chiCoarse & \phiCoarse
    \end{pmatrix}
    \begin{bmatrix}
        \uCoarse_{1}\\\uCoarse_{2}\\\vdots\\\uCoarse_{N}
    \end{bmatrix}
    =
    \begin{bmatrix}
        \chiCoarse\TFtoC(u_0\vect{\ones})\\0\\\vdots\\0
    \end{bmatrix}
    =: \vect{f^\CoarseId}.
\end{equation}
$\TFtoC$ is a block restriction operator, i.e. a transfer matrix
from a fine (F) to a coarse (C) block discretization.
Similarly, we have a block prolongation operator $\TCtoF$,
i.e. a transfer matrix from a coarse (C) to a fine (F) block discretization.

\begin{remark}
  While both $\phiCoarse$ and $\phiApprox$ are approximations of
  the fine operator $\phiOp$, the main difference between $\phiCoarse$
  and $\phiApprox$ is the size of the vectors they can be applied to
  ($\MCoarse$ and $M$).  Furthermore, $\phiCoarse$ itself does need
  the transfer operators $\TCtoF$ and $\TFtoC$ to compute approximate
  values on the fine time points, while $\phiApprox$ alone is
  sufficient (even if it can hide some restriction and interpolation
  process within).  However, the definition of a coarse grid
  correction in the classical multi-grid formalism needs this separation
  between transfer and coarse operators (see
  \cite[Sec.~2.2.2]{trottenberg2001multigrid}), which limits the use of
  $\phiApprox$ and requires the introduction of $\phiCoarse$.
\end{remark}

\subsection{Block iteration of a Coarse Grid Correction}\label{sec:coarseGridCorrection}

Let us consider a standalone Coarse Grid Correction (CGC),
without pre- or post-smoothing\footnote{The CGC is not convergent by itself
without a smoother.},
of a two-level multi-grid iteration~\cite{hackbusch2013multi} applied to
\eqref{eq:globalProblem}.
One CGC step applied to \eqref{eq:globalProblem} can be written as
\begin{equation}\label{eq:globalCoarseCorrection}
    \uvect^{k+1} = \uvect^{k} + \TCtoFBar \ACoarse^{-1} \TFtoCBar
    (\vect{f}-\AMat\uvect^k),
\end{equation}
where $\TCtoFBar$ denotes the block diagonal matrix formed with
$\TCtoF$ on the diagonal, and similarly for $\TFtoCBar$.
When splitting~\eqref{eq:globalCoarseCorrection} into two steps,
\begin{gather}
    \ACoarse\vect{d} = \TFtoCBar (\vect{f}-\AMat\uvect^k),\\
    \uvect^{k+1} = \uvect^{k} + \TCtoFBar\vect{d},
\end{gather}
the CGC term (or defect) $\vect{d}$ appears explicitly.
Expanding the two steps for $n>0$ into a block formulation and inverting $\phiCoarse$ leads to
\begin{align}
    \vect{d}_{n+1} &= \phiCoarse^{-1}\TFtoC\chiOp\uvect^{k}_{n}
    - \phiCoarse^{-1}\TFtoC\phiOp\uvect^{k}_{n+1}
    + \phiCoarse^{-1}\chiCoarse\vect{d}_{n}, \label{eq:coarseCorrection}\\
    \uvect_{n+1}^{k+1} &= \uvect_{n+1}^{k} + \TCtoF\vect{d}_{n+1}.\label{eq:addCoarseCorrection}
\end{align}
Now we need the following simplifying assumption.
\begin{assumption}\label{ass:transfersOp}
    Prolongation $\TCtoF$ followed by restriction $\TFtoC$ leaves
    the coarse block variables unchanged, \ie
    \begin{equation}
        \TFtoC \TCtoF = \eyeMat.
    \end{equation}
\end{assumption}
This condition is satisfied in many situations
(\eg restriction with standard injection on a coarse subset of the fine
    points, or polynomial interpolation with any possible coarse block discretization)%
\footnote{In some situations, \eg when the transpose of
    linear interpolation is used for
    restriction (full-weighting), we do not get the identity in Assumption~\ref{ass:transfersOp} but an invertible matrix.
    The same simplifications can be done, except one must take into	account the inverse of $(\TFtoC\TCtoF)$.}.
Using it in~\eqref{eq:addCoarseCorrection} for block index $n$ yields
\begin{equation}
    \vect{d}_{n} = \TFtoC\left(\uvect_{n}^{k+1} - \uvect_{n}^{k}\right).
\end{equation}
Inserting $\vect{d}_n$ into~\eqref{eq:coarseCorrection} on the right and the resulting $\vect{d}_{n+1}$ into~\eqref{eq:addCoarseCorrection} leads to
\begin{equation}\label{eq:twoLevel}
    \uvect_{n+1}^{k+1} = (\Imat-\TCtoF\phiCoarse^{-1}\TFtoC\phiOp)\uvect^{k}_{n+1}
    + \TCtoF\phiCoarse^{-1}\chiCoarse\TFtoC\uvect_{n}^{k+1}
    + \TCtoF\phiCoarse^{-1}\Delta_\chi\uvect^{k}_{n},
\end{equation}
with $\Delta_\chi := \TFtoC\chiOp - \chiCoarse\TFtoC$.
\begin{figure}
    \centering\hfil
    \includegraphics[height=0.2\linewidth]{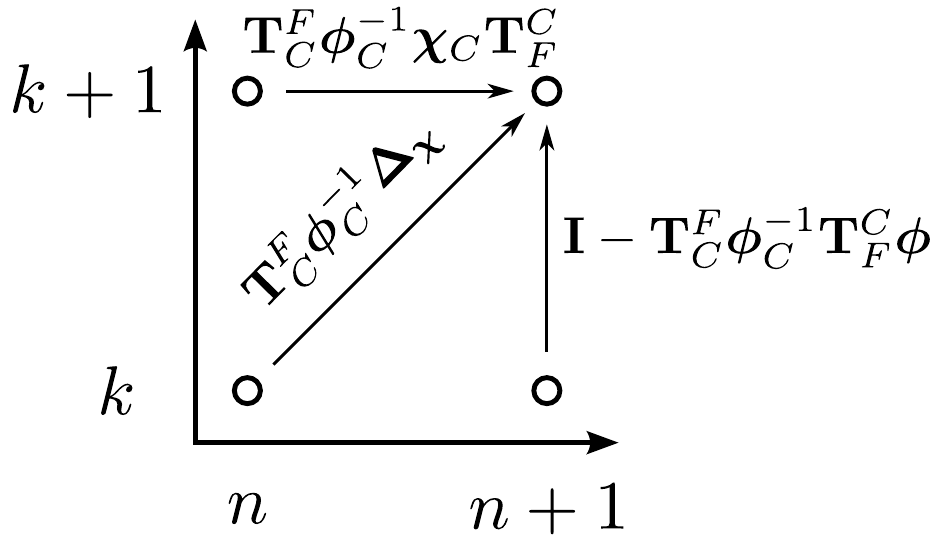}\hfil%
    \includegraphics[height=0.2\linewidth]{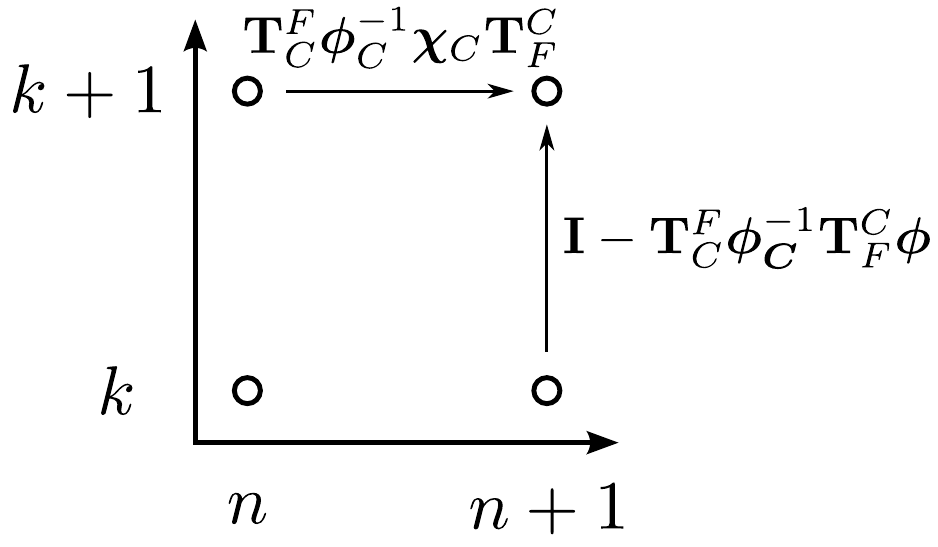}\hfil
    \caption{$kn$-graphs for the CGC block iteration, with Assumption~\ref{ass:transfersOp} only (left),
    and with both Assumptions~\ref{ass:transfersOp} and \ref{ass:deltaChi} (right).}
    \label{fig:kn-CGC}
\end{figure}
This is a primary block iteration in the sense of
Definition~\ref{def:primaryBlockIteration},
and we give its $kn$-graph
in Figure~\ref{fig:kn-CGC} (left).
We can simplify it further using a second assumption:
\begin{assumption}\label{ass:deltaChi}
    We consider operators $\TFtoC$, $\chiOp$ and $\chiCoarse$
    such that
    \begin{equation}
        \Delta_\chi = \TFtoC\chiOp - \chiCoarse\TFtoC = 0.
    \end{equation}
    This holds for classical time-stepping methods when both left and
    right time sub-interval boundaries are included in the block
    variables, or for collocation methods using Radau-II or Lobatto type
    nodes.
\end{assumption}
This last assumption is important to define \PFASST
(\cf Section~\ref{sec:PFASST-descr} and see
Bolten et al.~\cite[Remark 1]{bolten2017multigrid}
for more details) and simplifies the analysis
of \TMG, as both methods use this block iteration.
Then, \eqref{eq:twoLevel} reduces to
\begin{equation}\label{eq:twoLevelSimple}
    \uvect_{n+1}^{k+1} = (\Imat-\TCtoF\phiCoarse^{-1}\TFtoC\phiOp)\uvect^{k}_{n+1}
    + \TCtoF\phiCoarse^{-1}\TFtoC\chiOp\uvect_{n}^{k+1}.
\end{equation}
Again, this is a primary block iteration for which the $kn$-graph is given in Figure~\ref{fig:kn-CGC} (right).
It satisfies the consistency condition\footnote{Note that the consistency condition is satisfied even without assumption~\ref{ass:deltaChi}.}~\eqref{eq:primaryBlockIterCondition} since
$
((\Imat-\TCtoF\phiCoarse^{-1}\TFtoC\phiOp) - \Imat)
\phiOp^{-1}\chiOp
+ \TCtoF\phiCoarse^{-1}\TFtoC\chiOp = 0.
$

\subsection{Two-level Time Multi-Grid}\label{sec:two-levelTMG}

Gander and Neum\"uller introduced \STMG for discontinuous Galerkin approximations in time~\cite{gander2016analysis}, which leads to a similar system as~\eqref{eq:globalProblem}.
We describe the two-level approach for general time discretizations, following their multi-level description~\cite[Sec. 3]{gander2016analysis}.
Consider a coarse problem defined as in Section~\ref{sec:coarseGridCorrection} and a damped
block Jacobi smoother as in Section~\ref{ex:blockJacobi} with relaxation parameter $\omega$.
Then, a two-level \TMG iteration requires the following steps:
\begin{enumerate}
    \item $\nu_1$ pre-relaxation steps \eqref{eq:jacobiRelaxationGlobal} with block Jacobi smoother,
    \item one CGC \eqref{eq:globalCoarseCorrection} inverting the coarse grid operators,
    \item $\nu_2$ post-relaxation steps \eqref{eq:jacobiRelaxationGlobal} with the block Jacobi smoother,
\end{enumerate}
each corresponding to a block iteration.
If we combine all these block iterations we do not obtain a primary block
iteration but a more complex expression, of which the analysis is beyond the
scope of this paper.
However, a primary block iteration in the sense of Definition~\ref{def:primaryBlockIteration} is obtained when
\begin{itemize}
    \item Assumption~\ref{ass:deltaChi} holds, so that
    $\Delta_\chi=0$,
    \item only one pre-relaxation step is used, $\nu_1=1$,
    \item and no post-relaxation step is considered, $\nu_2=0$.
\end{itemize}
Then, the two-level iteration reduces to the two block updates
from~\eqref{eq:jacobiRelaxation} and~\eqref{eq:twoLevelSimple},
\begin{gather}
    \uvect_{n+1}^{k+1/2} = (1-\omega)\uvect_{n+1}^k +
    \omega\phiOp^{-1}\chiOp \uvect_{n}^k,
    \label{eq:STMGSmoother}\\
    \uvect_{n+1}^{k+1} =
    \left(\Imat-\TCtoF\phiCoarse^{-1}\TFtoC\phiOp\right)\uvect^{k+1/2}_{n+1}
    + \TCtoF\phiCoarse^{-1}\chiCoarse\TFtoC\uvect_{n}^{k+1},
    \label{eq:STMGTwoGrid}
\end{gather}
using $k+1/2$ as intermediate index.
Combining~\eqref{eq:STMGSmoother} and~\eqref{eq:STMGTwoGrid} leads to the primary block iteration
\begin{equation}\label{eq:STMG-generic}
    \uvect_{n+1}^{k+1} =
    \left(\Imat-\TCtoF\phiCoarse^{-1}\TFtoC\phiOp\right)
    \left[(1-\omega)\uvect_{n+1}^k +
    \omega\phiOp^{-1}\chiOp \uvect_{n}^k\right]
    + \TCtoF\phiCoarse^{-1}\chiCoarse\TFtoC\uvect_{n}^{k+1}.
\end{equation}
If $\omega \neq 1$, all block operators in this primary block iteration
are non-zero, and applying Theorem~\ref{th:errBoundPBI}
leads to the error bound~\eqref{Bound4}.
Since the latter is similar to the one obtained for~\PFASST in
Section~\ref{sec:PFASST-analysis}, we leave its comparison with numerical
experiments to Section \ref{sec:PFASST}.
For $\omega=1$ we get the simplified iteration
\begin{equation}\label{eq:STMG-simple}
    \uvect_{n+1}^{k+1} =
    \left(\phiOp^{-1}\chiOp-\TCtoF\phiCoarse^{-1}\TFtoC\chiOp\right)
    \uvect_{n}^k
    + \TCtoF\phiCoarse^{-1}\chiCoarse\TFtoC\uvect_{n}^{k+1},
\end{equation}
and the following result:
\begin{proposition}\label{prop:equiv-TMG-Parareal}
    Consider a CGC as in
    Section~\ref{sec:coarseGridCorrection}, such that the prolongation and
    restriction operators (in time) satisfy Assumption~\ref{ass:transfersOp}.
    If Assumption~\ref{ass:deltaChi} also holds and only one block Jacobi
    pre-relaxation step \eqref{eq:jacobiRelaxationGlobal} with $\omega=1$ is used before
    the CGC,
    then two-level \TMG is equivalent to \Parareal,
    where the coarse solver $\mathcal{G}$ uses the same time integrator as the
    fine solver $\mathcal{F}$ but with larger time steps, \ie
    \begin{equation}
        \mathcal{G} := \TCtoF\phiCoarse^{-1}\TFtoC\chiOp.
    \end{equation}
\end{proposition}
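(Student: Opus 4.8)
The plan is to show that the single primary block iteration obtained by composing one undamped ($\omega=1$) block Jacobi pre-relaxation with the CGC block iteration coincides, term by term, with the \Parareal update~\eqref{eq:pararealClassic} for the announced choice of coarse propagator. So the proof is essentially an identification of coefficients in two primary block iterations, plus a check that the resulting $\mathcal{G}$ has the stated meaning.

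First I would specialize~\eqref{eq:STMG-generic} to $\omega=1$. Since $\phiOp$ is bijective (Definition~\ref{def:blockOperators}), the factor $(\Imat-\TCtoF\phiCoarse^{-1}\TFtoC\phiOp)\,\phiOp^{-1}\chiOp$ collapses to $\phiOp^{-1}\chiOp - \TCtoF\phiCoarse^{-1}\TFtoC\chiOp$, which is exactly the $\uvect_n^k$-coefficient in~\eqref{eq:STMG-simple}; this merely replays the simplification already recorded just before the proposition, so I may start directly from~\eqref{eq:STMG-simple}. Next I would invoke Assumption~\ref{ass:deltaChi}, i.e. $\TFtoC\chiOp = \chiCoarse\TFtoC$, to rewrite the implicit term as $\TCtoF\phiCoarse^{-1}\chiCoarse\TFtoC = \TCtoF\phiCoarse^{-1}\TFtoC\chiOp$. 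Setting $\mathcal{G} := \TCtoF\phiCoarse^{-1}\TFtoC\chiOp$ and recalling $\mathcal{F}:=\phiOp^{-1}\chiOp$ from Section~\ref{sec:descrParareal}, iteration~\eqref{eq:STMG-simple} becomes
\[
\uvect_{n+1}^{k+1} = (\mathcal{F}-\mathcal{G})\uvect_n^k + \mathcal{G}\uvect_n^{k+1} = \mathcal{F}\uvect_n^k + \mathcal{G}\uvect_n^{k+1} - \mathcal{G}\uvect_n^k,
\]
which is precisely the \Parareal block iteration~\eqref{eq:pararealClassic}. Since both iterations use the same fixed initial condition $\uvect_0^k = u_0\vect{\ones}$ for all $k$ and are started from the same guess, they generate identical iterates, which is the asserted equivalence. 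Finally I would justify the interpretation of $\mathcal{G}$: by construction in Section~\ref{sec:coarseProblem}, $\phiCoarse$ is the \emph{same} time-integration method as $\phiOp$ applied on a coarser block with $\MCoarse<M$ points, so $\mathcal{G}$ indeed amounts to restricting to the coarse block, integrating there with the fine scheme on larger time steps, and prolonging back.

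The computation is routine, so there is no real obstacle; the only things requiring care are the two bookkeeping points. Assumption~\ref{ass:transfersOp} ($\TFtoC\TCtoF=\eyeMat$) is what legitimizes reducing the full CGC step~\eqref{eq:globalCoarseCorrection} to the primary block iteration~\eqref{eq:twoLevel}--\eqref{eq:twoLevelSimple} in the first place, and Assumption~\ref{ass:deltaChi} is what converts the explicit coarse term into the \Parareal coarse propagator. I would also remark that although one is tempted to write $\mathcal{G}=\phiApprox^{-1}\chiOp$ with $\phiApprox^{-1}=\TCtoF\phiCoarse^{-1}\TFtoC$, the latter is rank-deficient when $\MCoarse<M$, so the identification is at the level of the propagator $\mathcal{G}$ and of the update formula~\eqref{eq:pararealClassic}, not of a genuine operator $\phiApprox$.
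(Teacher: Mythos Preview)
Your proposal is correct and matches the paper's own argument, which is essentially the derivation leading up to the proposition: specialize~\eqref{eq:STMG-generic} to $\omega=1$ to obtain~\eqref{eq:STMG-simple}, use Assumption~\ref{ass:deltaChi} to rewrite $\chiCoarse\TFtoC=\TFtoC\chiOp$, and then identify the resulting block operators with those of~\eqref{eq:pararealClassic}. Your closing remark about the rank-deficiency of $\TCtoF\phiCoarse^{-1}\TFtoC$ also mirrors the paper's footnote on this point.
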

This is a particular case of a result obtained before by
Gander~\cite[Theorem~3.1]{gander2007analysis} but is presented here in the
context of our GFM framework and the definition of \Parareal given in
Section~\ref{sec:descrParareal}.
In particular, it shows that the simplified two-grid iteration
on~\eqref{eq:globalProblem} is equivalent to the preconditioned fixed-point
iteration~\eqref{eq:pararealGlobal} of \Parareal if some conditions are met
and  $\phiApprox^{-1} := \TCtoF\phiCoarse^{-1}\TFtoC$ is used as
the approximate integration operator\footnote{
    Note that, even if $\TCtoF\phiCoarse^{-1}\TFtoC$ is not invertible,
    this abuse of notation is possible as \eqref{eq:pararealGlobal} requires
    an approximation of $\phiOp^{-1}$ rather than an approximation of
    $\phiOp$ itself.}.
However, the \TMG iteration here updates also the fine time point values,
using $\TCtoF$ to interpolate the coarse values computed with
$\phiCoarse$, hence applying the \Parareal update
\emph{to all volume values}.
This is the only ``difference" with the original definition of \Parareal in
\cite{lions2001parareal}, where the update is only applied to the interface
value between blocks.

One key idea of \STMG that we have not described yet is the block
diagonal Jacobi smoother used for relaxation.  Even if its diagonal
blocks use a time integration operator identical to those of the
fine problem (hence requiring the inversion of $\phiOp$), their
spatial part in \STMG is approximated using one V-cycle multi-grid
iteration in space based on a pointwise smoother
\cite[Sec.~4.3]{gander2016analysis}. We do not cover this aspect in
our description of \TMG here, since we focus on time only,
but describe in the next section a similar approach that is used
for \PFASST.

\section{Writing \PFASST as a block iteration}
\label{sec:PFASST}

\PFASST is also based on a \TMG approach using an approximate relaxation step,
but the approximation of the block Jacobi smoother is
\emph{done in time and not in space, in contrast to \STMG}.  In
addition, the CGC in \PFASST is also approximated,
\ie there is no direct solve on the coarse level to compute the
CGC as in \STMG. One \PFASST iteration is therefore a
combination of an \emph{Approximate Block Jacobi} (ABJ) smoother, see
Section~\ref{sec:BJ-SDC}, followed by one (or more) ABGS iteration(s)
of Section~\ref{ex:ABGS} on the coarse level to approximate the
CGC~\cite[Sec.~3.2]{emmett2012toward}.
While we describe only the two-level variant, the algorithm can use more levels~\cite{emmett2012toward,SpeckEtAl2012}.
The main component of \PFASST is the approximation of the time integrator blocks using Spectral Deferred Corrections
(SDC)~\cite{dutt2000spectral}, from which its other key
components (ABJ and ABGS) are built.
Hence we first describe how SDC is used to define an ABGS iteration in Section~\ref{sec:SDC},
then ABJ in Section~\ref{sec:BJ-SDC},
and finally \PFASST in Section~\ref{sec:PFASST-descr}.

\subsection{Approximate Block Gauss-Seidel with SDC}\label{sec:SDC}
SDC can be seen as a preconditioner when integrating the ODE problem
\eqref{eq:dahlquist} with collocation methods, see Section~\ref{ex:collocation}.
Consider the block operators
\begin{equation}\label{eq:blockCollocation}
\phiOp := (\Imat-\Qmat), \quad
\chiOp := \Hmat \quad \Longrightarrow \quad
(\Imat-\Qmat) \vect{u}_{n+1} = \Hmat \vect{u}_{n}.
\end{equation}
SDC approximates the quadrature matrix $\Qmat$ by
\begin{equation}
\QDelta = \lambda\dt \left(\tilde{q}_{m,j}\right), \quad \tilde{q}_{m,j} = \int_{0}^{\tau_m} \tilde{l}_{j}(s)ds,
\end{equation}
where $\tilde{l}_{j}$ is an approximation of the Lagrange polynomial $l_j$.
Usually, $\QDelta$ is lower triangular~\cite[Sec~3]{ruprecht2016spectral} and easy to invert\footnote{
	The notation $\QDelta$ was chosen instead of $\matr{\tilde{Q}}$ for consistency with
	the literature, \cf \cite{ruprecht2016spectral,bolten2017multigrid,bolten2018asymptotic}.}.
This approximation is used to build the preconditioned iteration
\begin{equation}\label{eq:blockSDC}
\vect{u}_{n+1}^{k+1} = \vect{u}_{n+1}^{k} +
[\Imat - \QDelta]^{-1}\left(
\Hmat\vect{u}_{n}
- (\Imat-\Qmat)\vect{u}_{n+1}^k
\right)
\end{equation}
to solve \eqref{eq:blockCollocation}, with $\vect{u}_{n+1}$ as unknown.
We obtain the generic preconditioned iteration for one block,
\begin{equation}\label{eq:blockSDCGeneric}
    \vect{u}_{n+1}^{k+1} = \left[\eyeMat - \phiApprox^{-1}\phiOp\right]\vect{u}_{n+1}^k
    + \phiApprox^{-1} \chiOp\vect{u}_{n} \quad \text{with} \quad \phiApprox:=\Imat - \QDelta.
\end{equation}
This shows that SDC inverts the $\phiOp$ operator approximately
  using $\phiApprox$ block by block to solve the global
  problem~\eqref{eq:globalProblem}, \ie it fixes an $n$
  in~\eqref{eq:blockSDCGeneric}, iterates over $k$ until convergence,
  and then increments $n$ by one. Hence SDC gives a natural way
  to define an approximate block integrator $\phiApprox$ to be used
  to build ABJ and ABGS iterations.
Defining the ABGS iteration \eqref{eq:ABGSGlobal} of Section~\ref{ex:ABGS}
using the SDC block operators gives the block updating formula
\begin{equation}\label{eq:blockIterationBGS}
    \vect{u}_{n+1}^{k+1} = \vect{u}_{n+1}^{k} + [\Imat -
    \QDelta]^{-1}\left( \Hmat\vect{u}_{n}^{k+1} -
    (\Imat-\Qmat)\vect{u}_{n+1}^k \right),
\end{equation}
which we call \emph{Block Gauss-Seidel SDC} (BGS-SDC), very similar
to~\eqref{eq:blockSDC} except that we use the
new iterate $\uvect_{n}^{k+1}$ and not the
converged solution $\uvect_{n}$. This is a primary
block iteration in the sense of Definition~\ref{def:primaryBlockIteration}
with
\begin{equation}
    \begin{split}\label{eq:blockCoefficientsBGSSDC}
        \BMat^0_1 &:= \Imat - [\Imat - \QDelta]^{-1}(\Imat-\Qmat) = [\Imat - \QDelta]^{-1}(\Qmat-\QDelta),\\
        \BMat^0_0 &:= 0,\quad \BMat^1_0:= [\Imat - \QDelta]^{-1}\Hmat,
    \end{split}
\end{equation}
and its $kn$-graph is shown in Figure \ref{fig:kn-BJ-BGS} (right).

\subsection{Approximate Block Jacobi with SDC}
\label{sec:BJ-SDC}
Here we solve the global problem~\eqref{eq:globalProblem} using a preconditioner that can be easily parallelized (Block Jacobi) and
combine it with the approximation of the collocation operator $\phiOp$ by $\phiApprox$ defined in~\eqref{eq:blockCollocation} and~\eqref{eq:blockSDCGeneric}.
This leads to the \emph{global} preconditioned iteration
\begin{equation}\label{eq:approxJacobiGlobal}
\uvect^{k+1} = \uvect^k + \MJac^{-1}(\vect{f}-\AMat\uvect^k),\quad
\MJac =
\begin{bmatrix}
\phiApprox & & \\
& \phiApprox & \\
& & \ddots
\end{bmatrix}.
\end{equation}
This is equivalent to the block Jacobi relaxation in
Section~\ref{ex:blockJacobi} with $\omega=1$, except that the block
operator $\phiOp$ is approximated by $\phiApprox$.  Using the
SDC block operators~\eqref{eq:blockCollocation} gives the block
updating formula
\begin{equation}\label{eq:blockIterationBJ}
\vect{u}_{n+1}^{k+1} = \vect{u}_{n+1}^{k} +
[\Imat - \QDelta]^{-1}\left(
\Hmat\vect{u}_{n}^{k}
- (\Imat-\Qmat)\vect{u}_{n+1}^k
\right),
\end{equation}
which we call \emph{Block Jacobi SDC} (BJ-SDC).
This is a primary block iteration with
\begin{equation}
\begin{split}
\BMat^0_1 &:= \Imat - [\Imat - \QDelta]^{-1}(\Imat-\Qmat) = [\Imat - \QDelta]^{-1}(\Qmat-\QDelta),\\
\BMat^0_0 &:= [\Imat - \QDelta]^{-1}\Hmat,\quad \BMat^1_0 := 0.
\end{split}
\end{equation}
Its $kn$-graph is shown in Figure \ref{fig:kn-BJ-BGS} (left).
\begin{figure}
	\centering
	\includegraphics[height=0.2\linewidth]{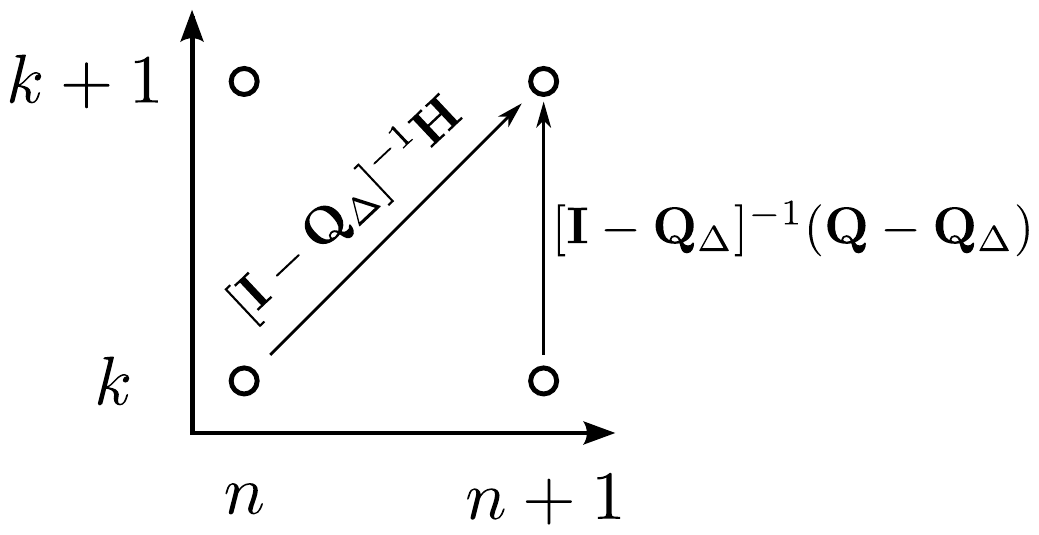}\hfil%
	\includegraphics[height=0.2\linewidth]{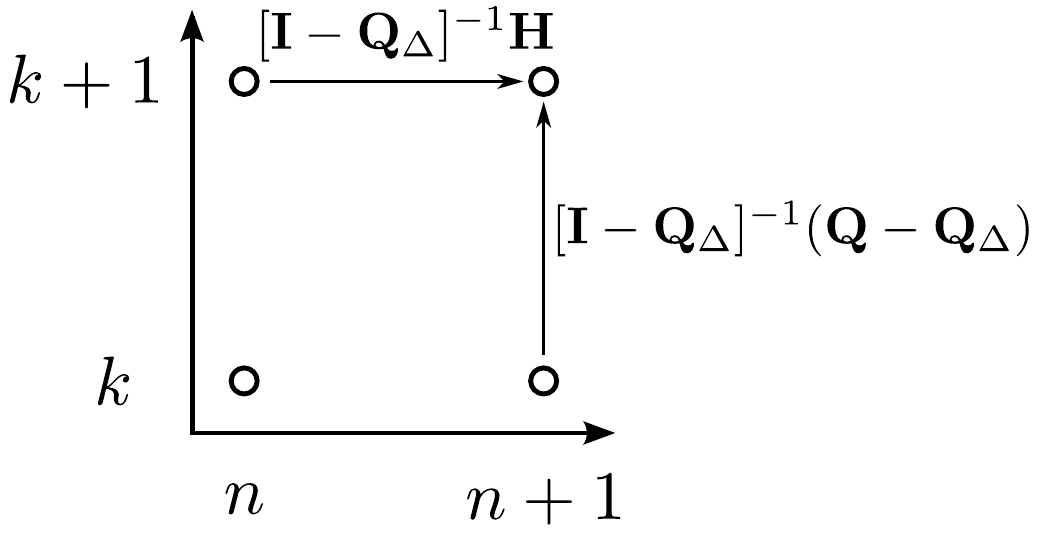}%
	\caption{$kn$-graphs for Block Jacobi SDC (left)
		and Block Gauss-Seidel SDC (right).}
	\label{fig:kn-BJ-BGS}
\end{figure}
This block iteration can be written in the more generic form
\begin{equation}\label{eq:blockJacobiGeneric}
\vect{u}_{n+1}^{k+1} = \left[\eyeMat - \phiApprox^{-1} \phiOp\right] \vect{u}_{n+1}^{k} +
\phiApprox^{-1}\chiOp\vect{u}_{n}^{k}.
\end{equation}
This is similar to~\eqref{eq:blockSDCGeneric} except that we use the
current iterate $\uvect_{n}^k$ from the previous block and not the
converged solution $\uvect_{n}$.
Note that $\phiOp$ and $\phiApprox$ do not need to correspond to
the SDC operators~\eqref{eq:blockCollocation} and~\eqref{eq:blockSDCGeneric}.
This block iteration does not explicitly depend on the use of SDC,
hence the name \emph{Approximate Block Jacobi} (ABJ).

\subsection{\PFASST}\label{sec:PFASST-descr}

We now give a simplified description of PFASST~\cite{emmett2012toward}
applied to the Dahlquist problem \eqref{eq:dahlquist}.  In particular,
this corresponds to doing only one SDC sweep on the coarse level.  To
write \PFASST as a block iteration, we first build the coarse level
as in Section~\ref{sec:coarseGridCorrection}.  From that we can form
the $\QTilde$ quadrature matrix
associated with the coarse nodes and the coarse matrix $\HTilde$,
as we would have done if we were using the collocation method of
    Section~\ref{ex:collocation} on the coarse nodes.
This leads  to the definition of the $\phiCoarse$ and $\chiCoarse$ operators for the coarse level, combined with the transfer operators $\TFtoC$ and $\TCtoF$,
from which we can build the global matrices $\ACoarse$, $\TCtoFBar$ and $\TFtoCBar$, see Section~\ref{sec:coarseGridCorrection}.
Then we build the two-level \PFASST iteration by defining a specific smoother and a modified CGC.

The smoother corresponds to a Block Jacobi SDC iteration
\eqref{eq:blockIterationBJ}
from Section~\ref{sec:BJ-SDC} to produce an intermediate solution
\begin{equation}\label{PFASSTSmoothingStep}
    \uvect_{n+1}^{k+1/2} = [\Imat-\QDelta]^{-1}(\Qmat-\QDelta)\uvect_{n+1}^k
    + [\Imat-\QDelta]^{-1}\Hmat\uvect_{n}^k,
\end{equation}
 denoted with iteration index $k+1/2$.
Using a CGC as in Section~\ref{sec:coarseGridCorrection} would provide the global update formula
\begin{gather}
    \ACoarse\vect{d} = \TFtoCBar (\vect{f}-\AMat\uvect^{k+1/2}),\label{eq:PFASST-directCGC}\\
    \uvect^{k+1} = \uvect^{k+1/2} + \TCtoFBar\vect{d}.\label{eq:PFASST-prolong}
\end{gather}
Instead of a direct solve with $\ACoarse$ to compute the defect $\vect{d}$, in PFASST one uses $L$ Block Gauss-Seidel SDC iterations (or sweeps) to approximate it.
Then \eqref{eq:PFASST-directCGC} becomes
\begin{equation}
    \MGSTilde \vect{d}^{\ell} =
    (\MGSTilde - \ACoarse)\vect{d}^{\ell-1}
    + \TFtoCBar (\vect{f}-\AMat\uvect^{k+1/2}),
    \quad \vect{d}^0 = 0,\quad \ell \in \{1,..,L\},
\end{equation}
and reduces for one sweep only ($L=1$) to
\begin{equation}
    \MGSTilde \vect{d} = \TFtoCBar (\vect{f}-\AMat\uvect^{k+1/2}),
    \quad\MGSTilde =
    \begin{bmatrix}
        \phiApproxCoarse & & \\
        - \chiCoarse & \phiApproxCoarse & \\
        & \ddots & \ddots
    \end{bmatrix}.
\end{equation}
Here $\MGSTilde$ correspond to the $\MGS$ preconditioning matrix, but written on the coarse level using an SDC-based approximation $\phiApproxCoarse$
of the $\phiCoarse$ coarse time integrator.
Combined with the prolongation on the fine level~\eqref{eq:PFASST-prolong}, we get the modified CGC update
\begin{equation}\label{eq:globalCoarseBGS}
    \uvect^{k+1} = \uvect^{k+1/2} + \TCtoFBar \MGSTilde^{-1} \TFtoCBar
    (\vect{f}-\AMat\uvect^{k+1/2}), \quad
    \MGSTilde =
    \begin{bmatrix}
        \phiApproxCoarse & & \\
        - \chiCoarse & \phiApproxCoarse & \\
        & \ddots & \ddots
    \end{bmatrix},
\end{equation}
and together with \eqref{PFASSTSmoothingStep} a two level method
for the global system~\eqref{eq:globalProblem}~\cite[Sec.~2.2]{bolten2018asymptotic}.
Note that this is the same iteration we obtained for the CGC in Section~\ref{sec:coarseGridCorrection}, except that the coarse operator $\phiCoarse$ has been replaced by $\phiApproxCoarse$.
Assumption~\ref{ass:deltaChi} holds, since using Lobatto or Radau-II nodes means $\Hmat$ has the form~\eqref{eq:collocation}, which implies
\begin{equation}
    \Delta_\chi = \TFtoC\Hmat-\HTilde\TFtoC = 0.
\end{equation}
Using similar computations as in Section~\ref{sec:coarseGridCorrection}
and the block operators defined for
collocation and SDC (\cf Section~\ref{ex:collocation} and
Section~\ref{sec:SDC}) we obtain the block iteration
\begin{equation}
    \uvect_{n+1}^{k+1} =
    [\Imat-\TCtoF	(\Imat - \QDeltaTilde)^{-1}\TFtoC(\Imat - \Qmat)]\uvect^{k+1/2}_{n+1}
    + \TCtoF(\Imat - \QDeltaTilde)^{-1}\TFtoC\Hmat\uvect_{n}^{k+1}
\end{equation}
by substitution into \eqref{eq:twoLevelSimple}.
Finally, the combination of the two gives
\begin{equation}\label{eq:PFASSTBlockIteration}
    \begin{split}
        \uvect_{n+1}^{k+1} &=
        [\Imat-\TCtoF	(\Imat - \QDeltaTilde)^{-1}\TFtoC(\Imat - \Qmat)]
        [\Imat-\QDelta]^{-1}(\Qmat-\QDelta)\uvect_{n+1}^k \\
        &~+ (\Imat-\TCtoF	[\Imat-\QDeltaTilde]^{-1}\TFtoC(\Imat-\Qmat))
        [\Imat-\QDelta]^{-1}\Hmat\uvect_{n}^k \\
        &~+ \TCtoF(\Imat - \QDeltaTilde)^{-1}\TFtoC\Hmat\uvect_{n}^{k+1}.
    \end{split}
\end{equation}
Using the generic formulation with the $\phiOp$ operators gives\footnote{
    We implicitly use
        $[\Imat-\QDelta]^{-1}(\Qmat-\QDelta)=\Imat - [\Imat - \QDelta]^{-1}(\Imat-\Qmat) = \eyeMat - \phiApprox^{-1} \phiOp$,
        see \eqref{eq:blockCoefficientsBGSSDC}.
}
\begin{equation}
    \begin{split}
        \uvect_{n+1}^{k+1} &=
        [\eyeMat - \TCtoF\phiApproxCoarse^{-1}\TFtoC \phiOp]
        (\eyeMat - \phiApprox^{-1} \phiOp)\uvect_{n+1}^k \\
        &~+ (\eyeMat - \TCtoF\phiApproxCoarse^{-1}\TFtoC \phiOp)
        \phiApprox^{-1}\chiOp\uvect_{n}^k
        + \TCtoF\phiApproxCoarse^{-1}\TFtoC\chiOp\uvect_{n}^{k+1}.
    \end{split}
\end{equation}
This is again a primary block iteration in the sense of Definition~\ref{def:primaryBlockIteration}, but in contrast to most previously described block iterations, all block operators are non-zero.

\subsection{Similarities between \PFASST, \TMG and \Parareal}\label{sec:PFASST-comp}

From the description in the previous section, it is clear that
\PFASST is very similar to \TMG.
While \TMG uses a (damped) block Jacobi smoother for pre-relaxation and
a direct solve for the CGC, \PFASST uses instead an approximate Block Jacobi
smoother, and solves the CGC using one (or more) ABGS iterations
on the coarse grid.
\begin{table}
    \renewcommand{\arraystretch}{1.5}\centering\small
    \begin{tabular}{c|c|c}
        \backslashbox{CGC}{Smoother}
        & Block Jacobi ($\omega=1$)
        & Approximate Block Jacobi \\\hline
        Direct Solver
        & \TMG ($\omega=1$)& \TMGFine
        \\
        ABGS (one step) & \TMGCoarse & Two-level \PFASST
    \end{tabular}\vspace{5pt}
    \caption{Classification of two-level \TMG methods, depending on their smoother for fine-level relaxation and computation of the Coarse Grid Correction (CGC).}
    \label{tab:twoGridClassification}
\end{table}
This interpretation was obtained by Bolten et al.~\cite[Theorem 1]{bolten2017multigrid},
but is derived here using the GFM framework,
and we summarize those differences in Table~\ref{tab:twoGridClassification}.
Changing only the CGC or the smoother in \TMG with $\omega=1$ in contrast to both like in PFASST
produces two further PinT algorithms.
We call those $\TMGCoarse$ (replacing the coarse solver by one step of ABGS)
and $\TMGFine$ (replacing the fine Block Jacobi solver by ABJ).
Note that \TMGCoarse can be interpreted as
\Parareal using an approximate integration operator and larger time step for
the coarse propagator if we set
\begin{equation}
    \mathcal{G} := \TCtoF\phiApproxCoarse^{-1}\TFtoC\chiOp.
\end{equation}
Thus, the version of \Parareal used in Section~\ref{sec:analysisParareal} is equivalent
to \TMGCoarse, and differs from \PFASST only by the type of
smoother used on the fine level.

\subsection{Analysis and numerical experiments}

\subsubsection{Convergence of \PFASST iteration components}
\label{sec:analysisBlockSDC}

Since Block Jacobi SDC~\eqref{eq:blockIterationBJ} can be written as a primary block iteration, we
can apply Theorem~\ref{th:errBoundPBI} with $\beta=0$ to get the error bound
\begin{equation}
  e_{n+1}^{k} \leq \begin{cases}
    \delta (\gamma + \alpha)^k \text{ if } k \leq n \\
    \displaystyle \delta \gamma^k \sum_{i=0}^{n}
     \binom{k}{i}\left(\frac{\alpha}{\gamma}\right)^i \text{ otherwise,}
  \end{cases}
\end{equation}
with $\gamma:= \norm{[\Imat - \QDelta]^{-1}(\Qmat-\QDelta)}$,
$\alpha:= \norm{[\Imat - \QDelta]^{-1}\Hmat}$.
Note that $\gamma$ is proportional to $\lambda\dt$
through the $\Qmat-\QDelta$ term and for small $\dt$, $\alpha$ tends to $\norm{\Hmat}$ which is constant.
We can identify two convergence regimes:  for early iterations ($k\leq n$), the
bound does
not contract if $\gamma+\alpha \geq 1$ (which is generally the case).
For later iterations ($k>n$), a small-enough time step leads to convergence of the algorithm through the $\gamma^k$ factor.

Similarly, for Block Gauss-Seidel SDC \eqref{eq:blockIterationBGS}, Theorem~\ref{th:errBoundPBI} with $\alpha=0$ gives
\begin{equation}
  e^k_{n+1} \leq \delta
  \frac{\gamma^k}{(k-1)!} \sum_{i=0}^{n}\prod_{l=1}^{k-1}(i+l)\beta^{i},
\end{equation}
where $\gamma := \norm{[\Imat - \QDelta]^{-1}(\Qmat-\QDelta)}$, $\beta:= \norm{[\Imat - \QDelta]^{-1}\Hmat}$.
This iteration contracts already in early iterations if $\gamma$ is small enough.
Since the value for $\gamma$ is the same for both Block Gauss-Seidel SDC
and Block Jacobi-SDC, both algorithms have an asymptotically similar
convergence rate.

We illustrate this with the following example.
Let $\lambda:=i$, $u_0:=1$, and let the time interval $[0,\pi]$ be divided into $N=10$ sub-intervals.
Inside each sub-interval, we use one step of the collocation method from
Section~\ref{ex:collocation} with $M:=10$
Lobatto-Legendre nodes~\cite{gautschi2004orthogonal}.
This gives us block variables of size $M=10$ and we choose $\QDelta$ as the
matrix defined by a single Backward Euler step between nodes to
build the $\phiApprox$ operator.
The starting value $\uvect^0$ for the iteration is initialized with random numbers starting from the same seed.
Figure~\ref{fig:blockSDC} (left) shows the numerical error for the last block using the $L^{\infty}$ norm, the bound obtained with the GFM method and the linear bound using the norm of the global iteration matrix.
\begin{figure}\label{fig:blockSDC}
  \centering
  \includegraphics[width=0.49\linewidth]{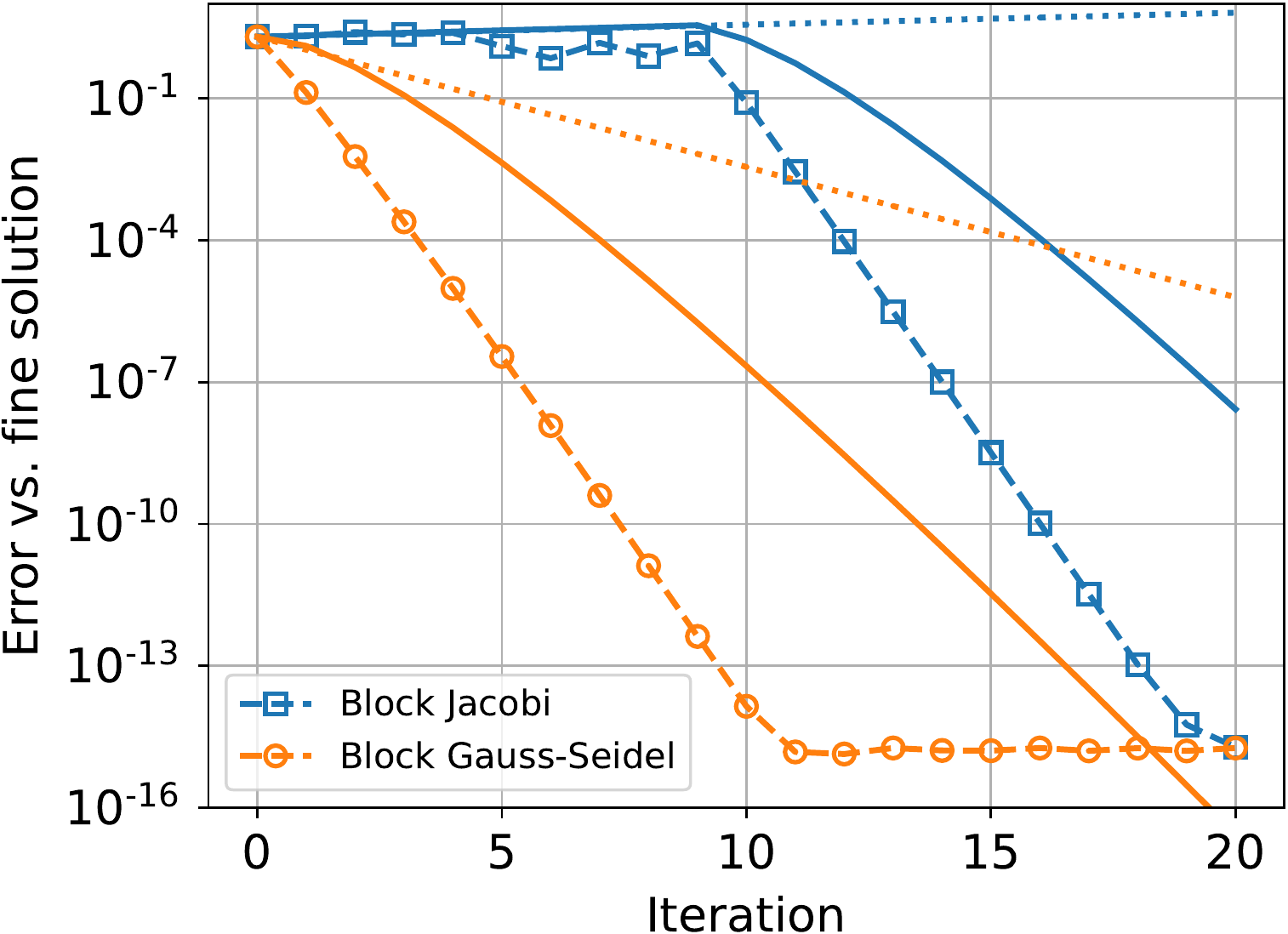}%
  \includegraphics[width=0.49\linewidth]{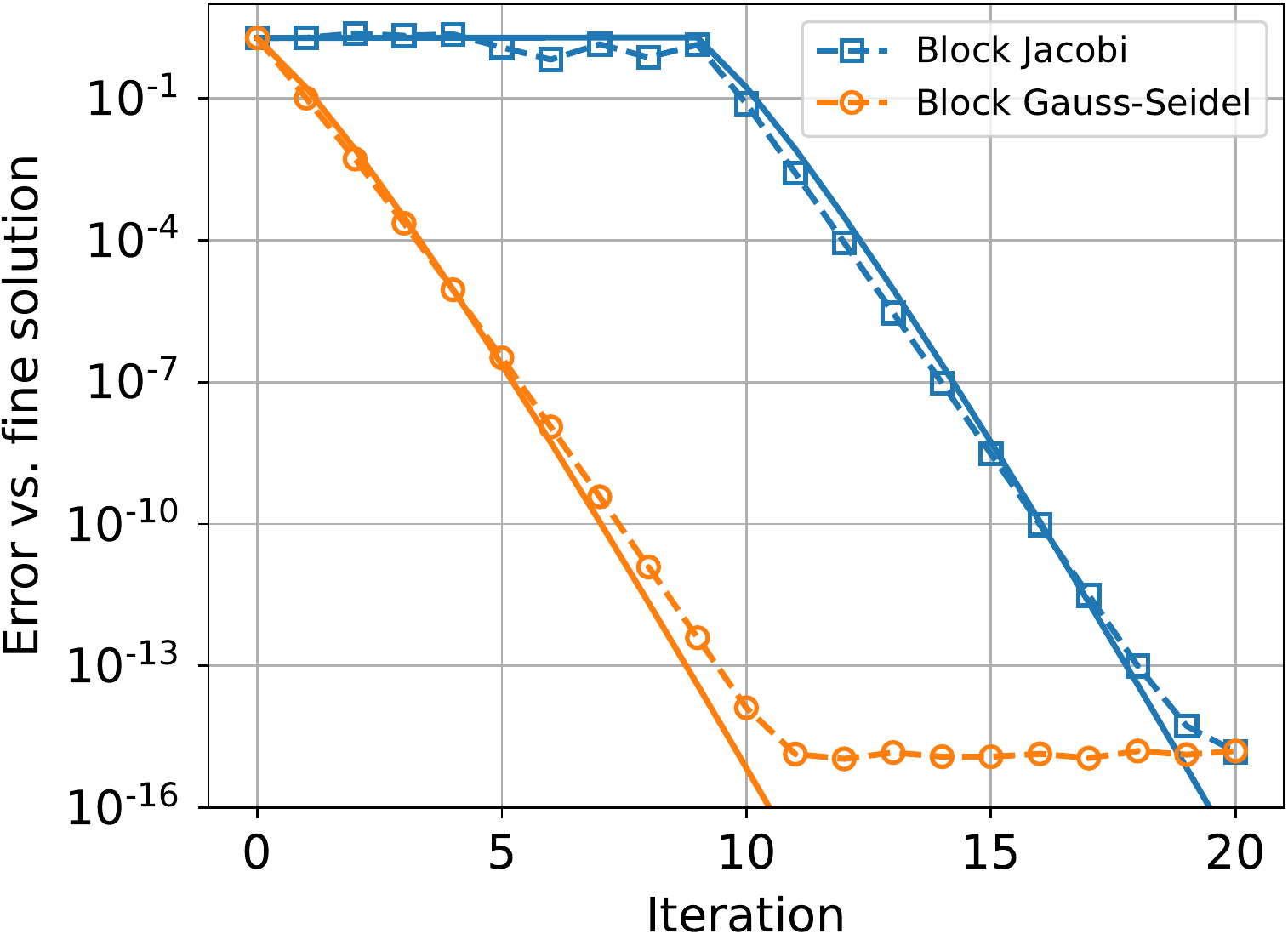}%
  \caption{Comparison of numerical errors with GFM-bounds for Block
    Jacobi SDC and Block Gauss-Seidel SDC. Left: error on the block variables
    (dashed), GFM-bounds (solid), linear
    bound from the iteration matrix (dotted).  Right: error estimate
    using the interface approximation from Corollary~\ref{cor:interfaceApproximation}.
    Note that the numerical errors on block variables (left) and at the interface
    (right) are close but not identical (see Remark~\ref{rem:interfaceVSblock}).}
\end{figure}
As for \Parareal in Section~\ref{sec:analysisParareal}, the GFM-bound is similiar to
the iteration matrix bound for the first few iterations, but much tighter for later iterations.
In particular, the linear bound cannot show the change in convergence regime of
the Block Jacobi SDC iteration (after $k=10$) but the GFM-bound does.
Also, we observe that while the GFM-bound overestimates the error,
the interface approximation of Corollary~\ref{cor:interfaceApproximation}
gives a very good estimate of the error at the interface, see
Figure~\ref{fig:blockSDC} (right).

\subsubsection{Analysis and convergence of \PFASST}
\label{sec:PFASST-analysis}

The GFM framework provides directly an error bound for \PFASST:
applying Theorem~\ref{th:errBoundPBI}
to~\eqref{eq:PFASSTBlockIteration} gives
\begin{equation}
  e_{n+1}^k \leq \delta \gamma^k \sum_{i=0}^{\min(n, k)} \sum_{l=0}^{n-i}
  \binom{k}{i}\binom{l+k-1}{l}
  \left(\frac{\alpha}{\gamma}\right)^i\beta^l,
\end{equation}
with $\gamma:= ||
[\Imat-\TCtoF	(\Imat - \QDeltaTilde)^{-1}\TFtoC(\Imat - \Qmat)]
[\Imat-\QDelta]^{-1}(\Qmat-\QDelta)||$,
$\beta := ||
\TCtoF(\Imat - \QDeltaTilde)^{-1}\HTilde\TFtoC ||$,
and $\alpha := ||
(\Imat-\TCtoF [\Imat-\QDeltaTilde]^{-1}\TFtoC(\Imat-\Qmat))
[\Imat-\QDelta]^{-1}\Hmat ||$.

\begin{figure}\label{fig:PFASST}
    \centering
    \includegraphics[width=0.49\linewidth]{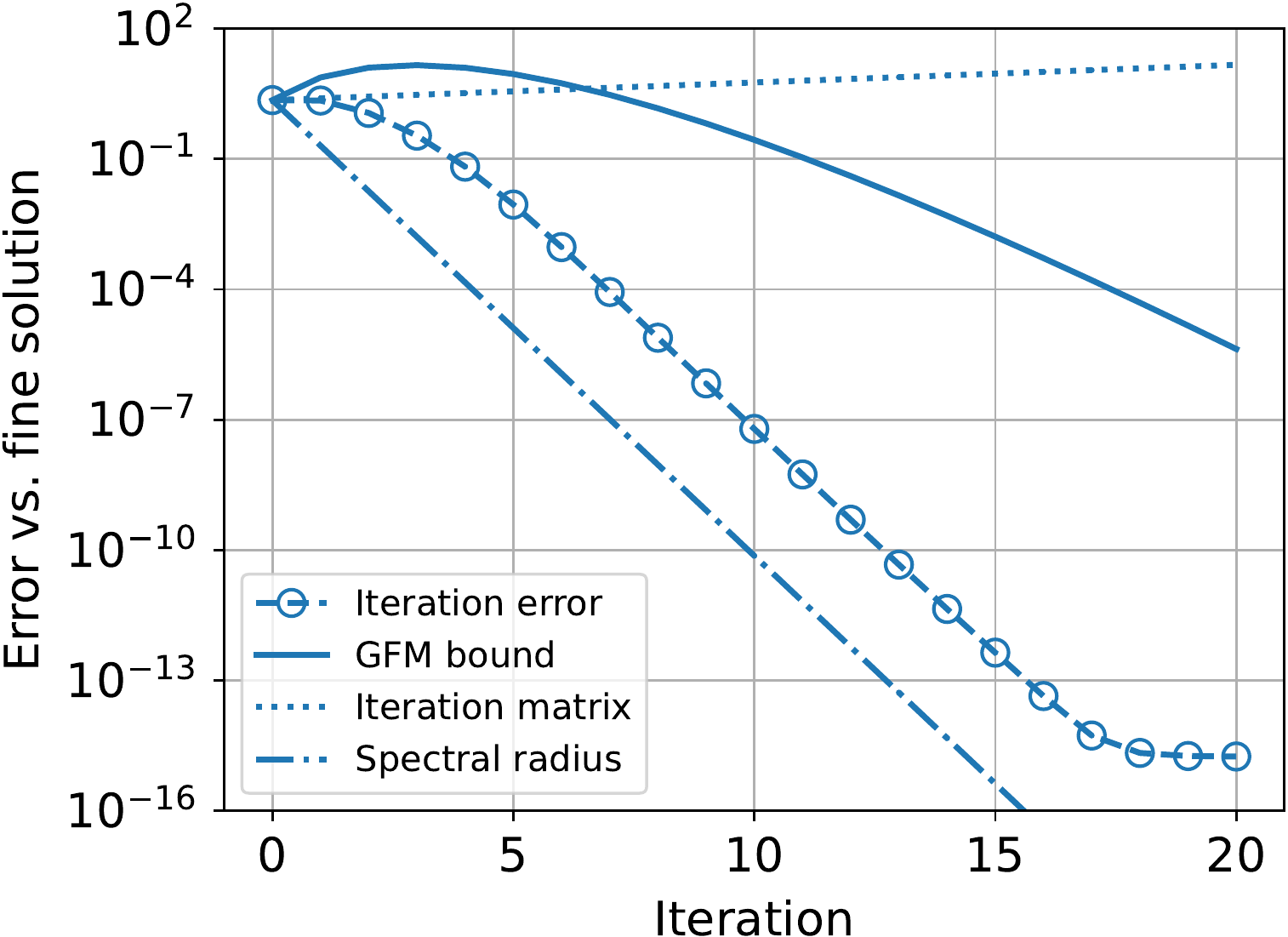}%
    \includegraphics[width=0.49\linewidth]{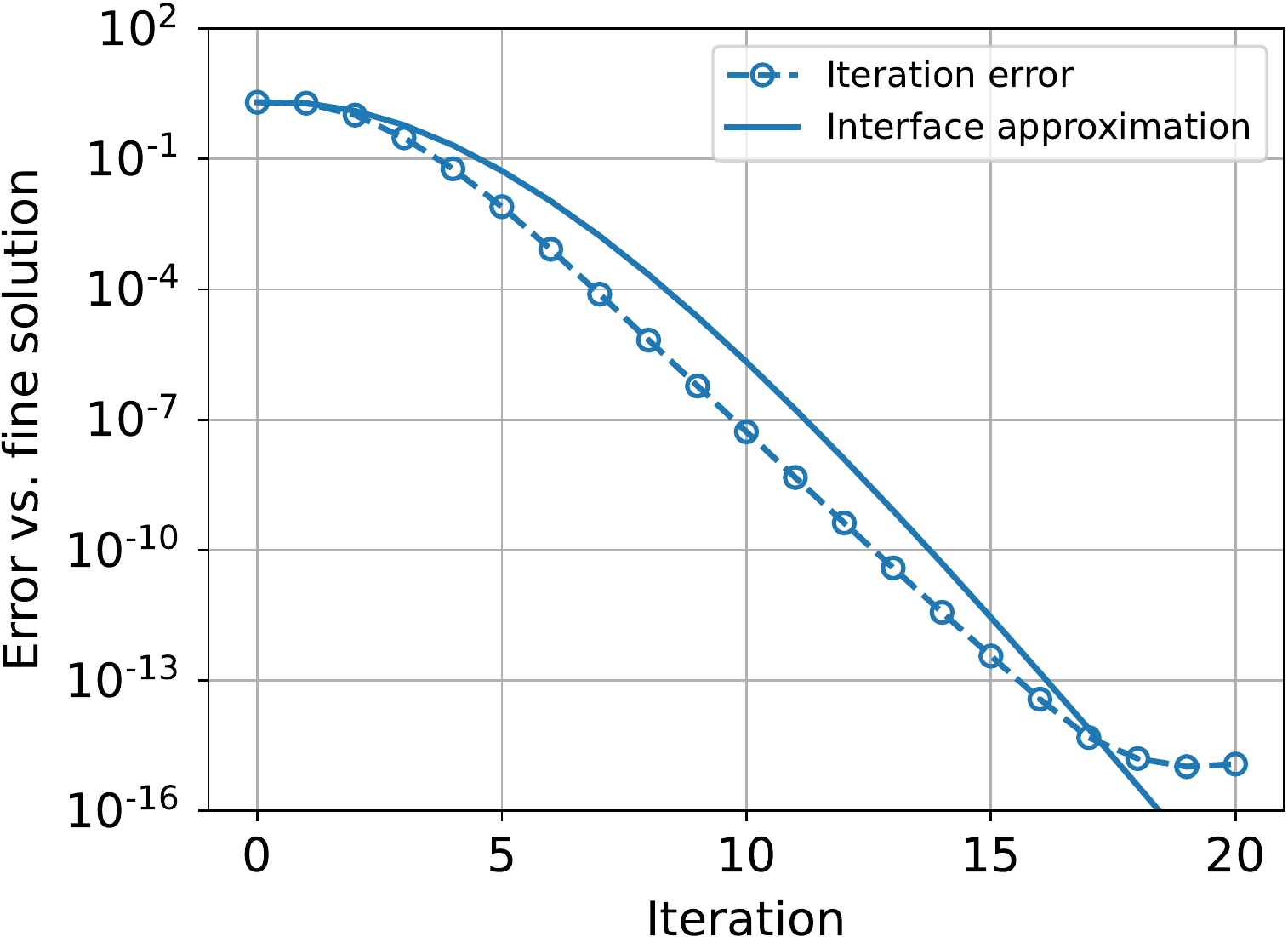}%
    \caption{Comparison of numerical errors with GFM-bounds for \PFASST.
        Left: error bound using volume values.  Right: estimate using
        the interface approximation.
        Note that the numerical errors on block variables (left) and at the interface
            (right) are very close but not identical (see Remark~\ref{rem:interfaceVSblock}).}
\end{figure}
We compare this bound with numerical experiments.
Let $\lambda:=i$, $u_0:=1$.
The time interval $[0,2\pi]$ for the Dahlquist problem \eqref{eq:dahlquist} is
divided into $N=10$ sub-intervals.
Inside each sub-interval we use $M:=6$ Lobatto-Legendre nodes on the fine level
and $\MCoarse:=2$ Lobatto nodes on the coarse level.
The $\QDelta$ and $\QDeltaTilde$ operators use Backward Euler.
In Figure~\ref{fig:PFASST} (left) we compare the measured numerical error with the
GFM-bound and the linear bound from the iteration
matrix.
As in Section~\ref{sec:analysisBlockSDC}, both bounds overestimate the
numerical error, even if the GFM-bound shows convergence for the later
iterations, which the linear bound from the iteration matrix cannot.
We also added an error estimate built using the spectral radius of the
iteration matrix, for which an upper bound was derived in \cite{bolten2018asymptotic}.
For this example, the spectral radius reflects the asymptotic convergence rate for the last iterations better than GFM.
This highlights a weakness of the current GFM-bound: applying norm and triangle inequalities to the vector error recurrence~\eqref{eq:errRecurrence} can induce a large approximation error in the scalar error recurrence~\eqref{eq:errRecurrenceScalar} that is then solved with generating functions.
Improving this is planned for future work.

However, one advantage of the GFM-bound over the spectral radius is its
generic aspect allowing it to be applied to many iterative algorithms,
even those having an iteration matrix with spectral radius equal to zero
like \Parareal~\cite{ruprecht2012explicit}.
Furthermore, the interface approximation from
Corollary~\ref{cor:interfaceApproximation} allows us to get a significantly
better estimation of the numerical error, as
shown in Figure~\ref{fig:PFASST} (right).
For the GFM-bound we have $(\alpha,\beta, \gamma) = (0.16, 1, 0.19)$,
while for the interface approximation we get
$(\bar{\alpha}, \bar{\beta}, \bar{\gamma}) = (0.16, 0.84, 0.02)$.
In the second case, since $\bar{\gamma}$ is one order smaller than the other
coefficients, we get an error estimate that is closer to the one for \Parareal
in Section~\ref{sec:analysisParareal} where $\gamma=0$.
This similarity between \PFASST and \Parareal ~(\cf Section~\ref{sec:PFASST-comp})
will be highlighted in the next section.
\section{Comparison of iterative PinT algorithms}
\label{sec:comparison}

Using the notation of the GFM framework, we provide the
primary block iterations of all iterative PinT algorithm
investigated throughout this paper in Table~\ref{tab:blockIterations}.
\begin{table}
    \renewcommand{\arraystretch}{1.8}\centering\scriptsize
    \begin{tabular}{c|c|c|c}
        Algorithm
        & $\BMat_1^0$ ($\uvect_{n+1}^k$)
        & $\BMat_0^0$ ($\uvect_{n}^k$)
        & $\BMat_0^1$ ($\uvect_{n}^{k+1}$) \\
        \hline
        damped Block Jacobi
        & $\eyeMat-\omega\eyeMat$
        & $\omega\phiOp^{-1}\chiOp$
        &  -- \\
        ABJ
        & $\eyeMat - \phiApprox^{-1} \phiOp$
        & $\phiApprox^{-1}\chiOp$
        & -- \\
        ABGS
        & $\eyeMat - \phiApprox^{-1} \phiOp$
        & --
        & $\phiApprox^{-1}\chiOp$\\
        \hline
        \Parareal
        & --
        & $(\phiOp^{-1}-\phiApprox^{-1})\chiOp$
        & $\phiApprox^{-1}\chiOp$\\
        \TMG
        & $(1-\omega)(\eyeMat - \TCtoF\phiCoarse^{-1}\TFtoC \phiOp)$
        & $\omega(\phiOp^{-1}-\TCtoF\phiCoarse^{-1}\TFtoC)\chiOp$
        & $\TCtoF\phiCoarse^{-1}\TFtoC\chiOp$ \\
        \TMGCoarse
        & --
        & $(\phiOp^{-1}-\TCtoF\phiApproxCoarse^{-1}\TFtoC)\chiOp$
        & $\TCtoF\phiApproxCoarse^{-1}\TFtoC\chiOp$ \\
        \TMGFine
        & $(\eyeMat - \TCtoF\phiCoarse^{-1}\TFtoC \phiOp)
        (\eyeMat - \phiApprox^{-1} \phiOp)$
        & $(\phiApprox^{-1}
        - \TCtoF\phiCoarse^{-1}\TFtoC \phiOp\phiApprox^{-1})
        \chiOp$
        & $\TCtoF\phiCoarse^{-1}\TFtoC\chiOp$\\
        \PFASST
        & $(\eyeMat - \TCtoF\phiApproxCoarse^{-1}\TFtoC \phiOp)
        (\eyeMat - \phiApprox^{-1} \phiOp)$
        & $(\phiApprox^{-1}
        - \TCtoF\phiApproxCoarse^{-1}\TFtoC \phiOp\phiApprox^{-1})
        \chiOp$
        & $\TCtoF\phiApproxCoarse^{-1}\TFtoC\chiOp$
    \end{tabular}\vspace{5pt}
    \label{tab:blockIterations}
    \caption{ Summary of all the methods we analyzed, and their block
      iteration operators.  Note that TMG with $\omega=1$ and
      \TMGCoarse corresponds to \Parareal with a specific
      choice of the coarse propagator.}
\end{table}
In particular, the first rows summarize the basic block iterations
used as components to build the iterative PinT methods.
While damped Block Jacobi (Section~\ref{ex:blockJacobi}) and ABJ
(Section~\ref{sec:BJ-SDC}) are more suitable for smoothing\footnote{
    Note that algorithms used as smoother have $\BMat_0^1=0$, which is a
        necessary condition for parallel computation across all blocks.
},
ABGS (Section~\ref{ex:ABGS}) is mostly used as solver
(\eg to compute the CGC).
This allows us to compare the convergence of each block
iteration, and we illustrate this with the following examples.

\begin{table}\renewcommand{\arraystretch}{1.5}\centering
    \begin{tabular}{c|c||c|c|c}
        & $\phiOp^{-1}\chiOp$ & $\phiApprox^{-1}\chiOp$ &
        $\TCtoF\phiCoarse^{-1}\TFtoC\chiOp$ &
        $\TCtoF\phiApproxCoarse^{-1}\TFtoC\chiOp$ \\ \hline
        Figure~\ref{fig:comparison} (left)
        & $1.20e^{-5}$ & $3.57e^{-1}$ & $1.19e^{-2}$ & $4.87e^{-1}$ \\
        Figure~\ref{fig:comparison} (right)
        & $3.14e^{-4}$ & $6.24e^{-2}$ & $5.14e^{-3}$ & $2.67e^{-1}$ \\
    \end{tabular}\vspace{5pt}
    \caption{Maximum error over time for each block propagator run sequentially.
        The first column shows the error of the fine propagator,
            while the next three columns show the error of the three possible approximate
            propagators.
            In the top row, $\phiOp$ corresponds to a collocation method with $M=5$
            nodes while $\phiCoarse$ is a collocation method with $M=3$ nodes.
            $\phiApprox$ is a backward Euler method with $M=5$ steps per block
            while $\phiApproxCoarse$ is backward Euler with $M=3$ steps per
            block.
            In the bottom row, $\phiOp$ corresponds to $M=5$ uniform steps per block of
            a $4^{th}$ order Runge-Kutta method, $\phiCoarse$ is the same method
            with $M=3$ steps per block.
            $\phiApprox$ is a $2^{nd}$ order Runge-Kutta method (Heun) with $M=5$
            uniform steps per block while $\phiApproxCoarse$ is the same method
            with $M=3$ uniform time steps per block.}
    \label{tab:discretizationErrors}
\end{table}
We consider the Dahlquist problem with $\lambda:=2i-0.2$, $u_0=1$.
First, we decompose the simulation interval $[0,  2\pi]$ into $N=10$
sub-intervals.
Next, we choose a block discretization with $M=5$ Lobatto-Legendre nodes,
a collocation method on each block for fine integrator $\phiOp$,
see Section~\ref{ex:collocation}.
We build a coarse block discretization using $\MCoarse=3$, and define
on each level an approximate integrator using Backward Euler.
This allows us to define the $\phiApprox$, $\phiCoarse$ and
$\phiApproxCoarse$ integrators, see the legend of
Table~\ref{tab:discretizationErrors} for more details, where
 we show the maximum absolute error in time for each of the four propagators run sequentially.
The high order collocation method with $M=5$ nodes $\phiOp^{-1} \chiOp$ is the most accurate.
The coarse collocation method with $M=3$ nodes interpolated to the fine mesh is still more accurate than the backward Euler method with $M=5$ nodes $\phiApprox^{-1} \chiOp$ or the backward Euler method with $M=3$ interpolated to the fine mesh.
Then we run all algorithms in Table~\ref{tab:blockIterations},
initializing the block variable iterate with the same random initial guess.
The error for the last block variable with respect to the fine
sequential solution is shown in Figure~\ref{fig:comparison} (left).
In addition, we show the same results in Figure~\ref{fig:comparison} (right),
but using the classical $4^{th}$ order Runge-Kutta method as fine propagator,
$2^{nd}$ order Runge-Kutta (Heun method) for the approximate integration operator
and equidistant points using a volume formulation as described in
Section~\ref{ex:RungeKutta}.
\begin{figure}\label{fig:comparison}
	\centering
	\includegraphics[width=0.49\linewidth]{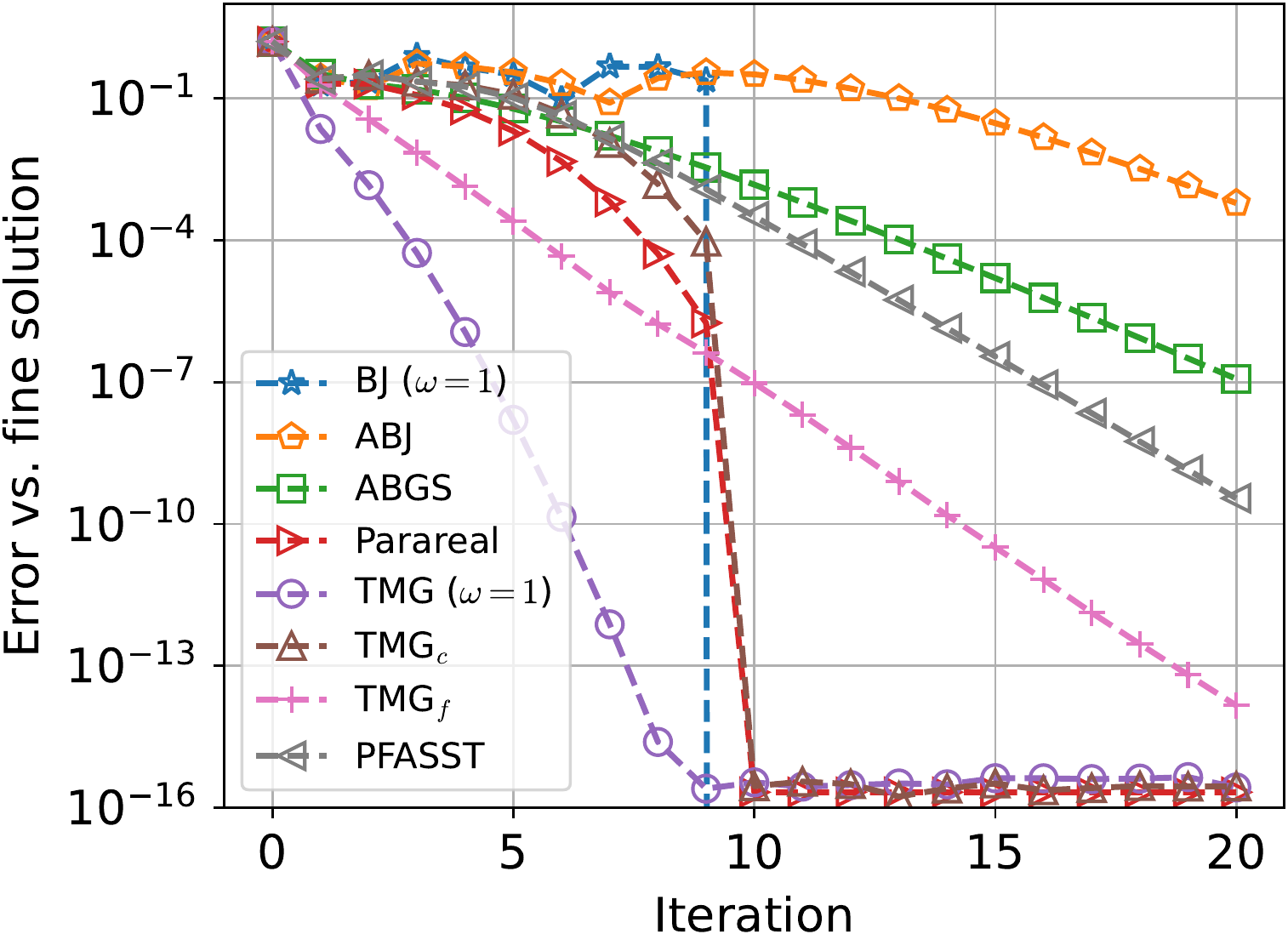}~%
	\includegraphics[width=0.49\linewidth]{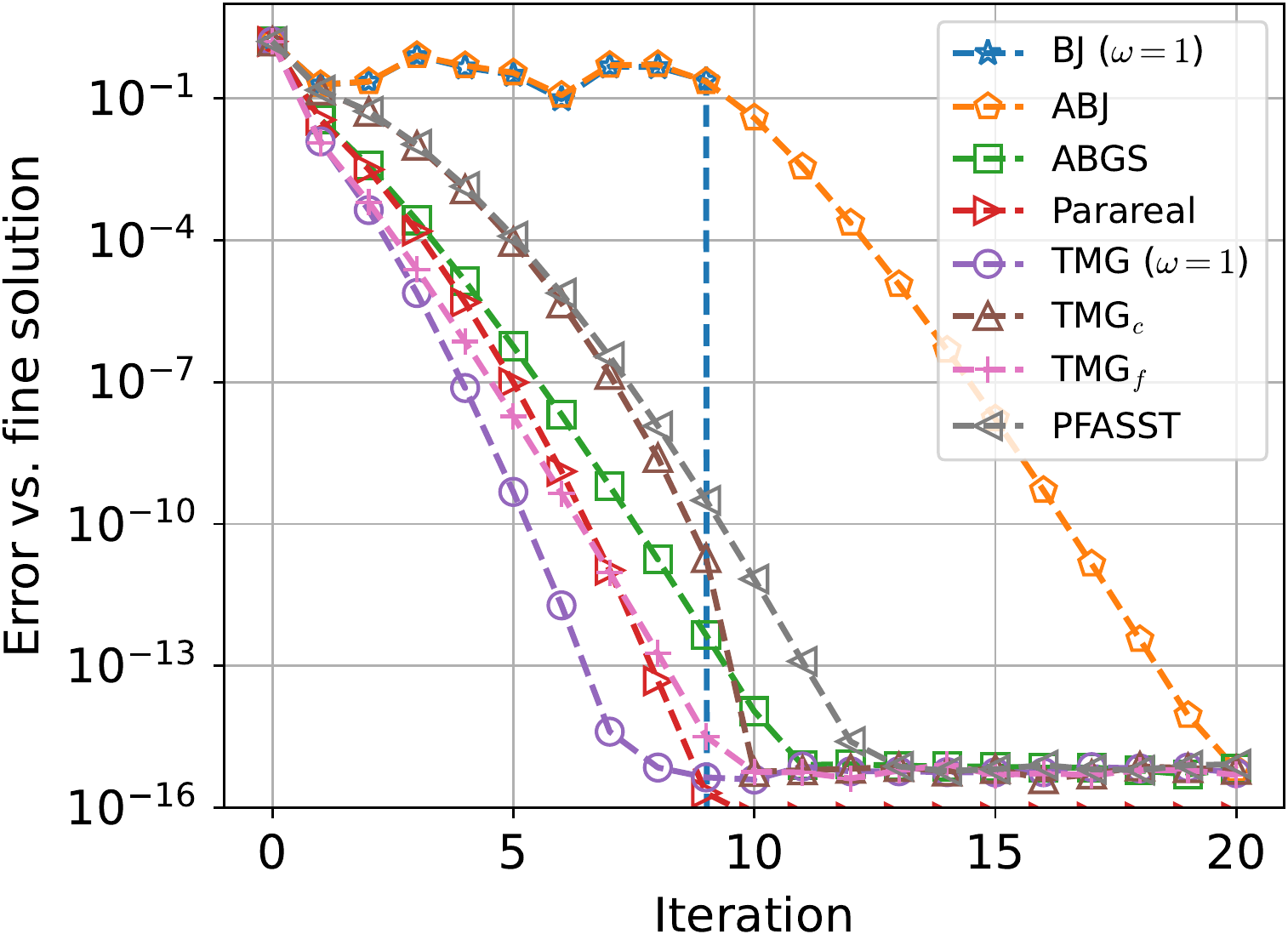}%
	\caption{Comparison of iterative methods convergence using the GFM framework.
	 Left: collocation as fine integrator.
	 Right: $4^{th}$ order Runge-Kutta method as fine integrator.
 	 \Parareal ($_{\omega=1}$) and \Parareal (\TMGCoarse) denote
 	 a specific coarse propagator for \Parareal.}
\end{figure}
Note that \Parareal, \TMG$_{\omega=1}$ and \TMGCoarse
are each \Parareal algorithms using respectively
$\phiApprox^{-1}\chiOp$, $\TCtoF\phiCoarse^{-1}\TFtoC\chiOp$
and $\TCtoF\phiApproxCoarse^{-1}\TFtoC\chiOp$ as coarse propagator $\mathcal{G}$
(see Table~\ref{tab:discretizationErrors} for their discretization error).

The TMG iteration converges fastest, since it uses the most accurate block integrators on both levels, cf. Table~\ref{tab:discretizationErrors}.
Keeping the same CGC but approximating the smoother, \TMGFine improves the first iterations, but convergence for later iterations is closer to \PFASST.
This suggests that convergence for later iterations is mostly governed by the accuracy of the smoother since both \TMGFine and \PFASST use ABJ.
This is corroborated by the comparison of \PFASST and \TMGCoarse, which differ only in their
choice of smoother.
While the exact Block Jacobi relaxation makes \TMGCoarse converge after $k=N$ iterations (a well known property of \Parareal), using the ABJ smoother means that \PFASST does not share this property.

On the other hand, the first iterations are also influenced by the CGC accuracy.
The iteration error is very similar for \PFASST and \TMGCoarse which have the same CGC.
This is more pronounced when using the $4^{th}$ order Runge-Kutta method for $\phiOp$, as we see in Figure~\ref{fig:comparison} (right).
Early iteration errors are similar for two-level methods that use the same CGC (\TMG / \TMGFine, and \PFASST / \TMGCoarse).
Similarities of the first iteration errors can also be observed
for \Parareal and ABGS.
Both algorithms use the same $\BMat_0^1$ operator, see
Table~\ref{tab:blockIterations}.
This suggests that early iteration errors are mostly governed by the
accuracy of $\BMat_0^1$, which is corroborated by the two-level methods
(\TMG and \TMGFine use the same $\BMat_0^1$ operator,
as \PFASST and \TMGCoarse).
\begin{remark}
  An important aspect of this analysis is that it compares
    \emph{only the convergence} of each algorithm, and not
      their overall computational cost.
  For instance, \PFASST and \TMGCoarse appear to
  be equivalent for the first iterations,
  but the block iteration of \PFASST is cheaper than \TMGCoarse, because
  an approximate block integrator is used for relaxation.
  To account for this and build a model for computational efficiency, the GFM framework would need to be combined with a model for computational cost of the different parts in the block iterations.
  Such a study is beyond the scope of this paper but is the subject of ongoing work.
\end{remark}

\section{Conclusion}\label{sec:conclusion}

We have shown that the Generating Function Method (GFM) can be used to compare convergence of different iterative PinT algorithms.
To do so, we formulated popular methods like \Parareal, \PFASST, \MGRIT or \TMG in a common framework based on the definition of a primary block iteration.
The GFM analysis showed that all these methods eventually converge super-linearly\footnote{
    This is due to the factorial term stemming from the binomial sums in the estimates~\eqref{Bound1}-\eqref{Bound4}.
    }
due to the evolution nature of the problems.
We confirmed this by numerical experiments and our \textsc{Python} code is publically available \cite{lunet2023gfm}.

Our analysis opens up further research directions.
For example, studying multi-step block iterations like \MGRIT with FCF-relaxation and more complex two-level methods without Assumption~\ref{ass:deltaChi} would be a useful extension of the GFM framework.
Similarly, an extension to multi-level versions of \STMG, \PFASST and \MGRIT would be very
valuable.
Finally, in practice PinT methods are used to solve space-time problems.
The GFM framework should be able to provide convergence bounds in this case as
well, potentially even for non-linear problems, considering GFM was used
successfully to study \Parareal applied to non-linear systems of
ODEs~\cite{gander2008nonlinear}.

\section*{Acknowledgments}
We greatly appreciate the very detailed feedback from the anonymous reviewers.
It helped a lot to improve the organization of the paper and to make it more
accessible.

\appendix

\section{Error bounds for \emph{Primary Block Iterations}}
\label{ap:errPrimary}

\subsection{Incomplete Primary Block Iterations}
First, we consider
\begin{align}
	\text{(PBI-1)} :\quad \uvect^{k+1}_{n+1} &=
		\BMat^1_0\left(\uvect^{k+1}_{n}\right) + \BMat^0_0\left(\uvect^{k}_{n}\right),
	\label{eq:pbi1}\\
	\text{(PBI-2)} :\quad \uvect^{k+1}_{n+1} &=
		\BMat^0_1(\uvect^k_{n+1}) + \BMat^0_0\left(\uvect^{k}_{n}\right),
	\label{eq:pbi2}\\
	\text{(PBI-3)} :\quad \uvect^{k+1}_{n+1} &=
		\BMat^0_1(\uvect^k_{n+1}) + \BMat^1_0\left(\uvect^{k+1}_{n}\right),\label{eq:pbi3}
\end{align}
where one block operator is zero. (PBI-1) corresponds to \Parareal, (PBI-2)
to Block Jacobi SDC and (PBI-3) to Block Gauss-Seidel SDC.
We recall the notations :
\begin{equation}\label{alphabetagammadef}
\alpha := \norm{\BMat^0_0}, \quad \beta:= \norm{\BMat^1_0}, \quad
\gamma := \norm{\BMat^0_1}.
\end{equation}
Application of Lemma~\ref{lem:primary} gives the recurrence relations
\begin{align}
\text{(PBI-1)} :\quad \rho_{k+1}(\zeta)
	&\leq \frac{\alpha \zeta}{1-\beta\zeta} \rho_{k}(\zeta)
	\Longrightarrow \rho_{k}(\zeta)
	\leq \alpha^{k}\left(\frac{\zeta}{1-\beta\zeta}\right)^{k}\rho_{0}(\zeta)\\
\text{(PBI-2)} :\quad \rho_{k+1}(\zeta) &\leq (\gamma + \alpha\zeta)
	\rho_{k}(\zeta)
	\Longrightarrow \rho_{k}(\zeta)
	\leq \gamma^{k}\left(1 + \frac{\alpha}{\gamma}\zeta\right)^{k}
	\rho_{0}(\zeta)\\
\text{(PBI-3)} :\quad \rho_{k+1}(\zeta) &\leq \frac{\gamma}{1-\beta\zeta}
	\rho_{k}(\zeta)
	\Longrightarrow \rho_{k}(\zeta)
	\leq \gamma^{k}\frac{1}{(1-\beta\zeta)^{k}} \rho_{0}(\zeta)
\end{align}
for the corresponding generating functions.
Using definition\footnote{
    The definition of $\delta$ as maximum error for $n\in\{0,\dots, N\}$ can be extended to $n\in\mathbb{N}$, as the error values for $n>N$ do not matter and can be set to zero.}~\eqref{eq:deltaDefinition} for $\delta$, we find that
$
\rho_{0}(\zeta) \leq \delta \sum_{n=0}^{\infty}\zeta^{n+1}.
$
By using the binomial series expansion
\begin{equation}
\frac{1}{(1-\beta\zeta)^{k}}
= \sum_{n=0}^{\infty} \binom{n+k-1}{n}(\beta\zeta)^n
\end{equation}
for $k>0$ and the Newton binomial sum, we obtain for the three block iterations
\begin{align}
\text{(PBI-1)} :\quad \rho_{k}(\zeta) &\leq \delta \alpha^{k} \zeta
	\left[\sum_{n=0}^{\infty} \binom{n+k-1}{n}\beta^n\zeta^{n+k}\right]
	\left[\sum_{n=0}^{\infty}\zeta^{n}\right] \\
\text{(PBI-2)} :\quad \rho_{k}(\zeta) &\leq \delta \gamma^{k} \zeta
	\left[\sum_{n=0}^{k}
	\binom{k}{n}\left(\frac{\alpha}{\gamma}\right)^n\zeta^{n}\right]
	\left[\sum_{n=0}^{\infty}\zeta^{n}\right] \\
\text{(PBI-3)} :\quad \rho_{k}(\zeta) &\leq \delta \gamma^{k} \zeta
	\left[\sum_{n=0}^{\infty} \binom{n+k-1}{n}\beta^n\zeta^{n}\right]
	\left[\sum_{n=0}^{\infty}\zeta^{n}\right].
\end{align}

\paragraph{Error bound for PBI-1}
We simplify the expression using
\begin{equation}
\left[\sum_{n=0}^{\infty} \binom{n+k-1}{n}\beta^n\zeta^{n+k}\right] =
\left[\sum_{n=k}^{\infty} \binom{n-1}{n-k}\beta^{n-k}\zeta^{n}\right],
\end{equation}
and then the series product formula
\begin{equation}\label{eq:seriesProduct}
\left[\sum_{n=0}^{\infty} a_n\zeta^n\right] \left[\sum_{n=0}^{\infty} b_n\zeta^n\right]
	= \sum_{n=0}^{\infty} c_n\zeta^n, \quad c_n = \sum_{i=0}^{n} a_i b_{n-i},
\end{equation}
with $b_n = 1$ and
\begin{align}
a_n = \begin{cases}
	0 \text{ if } n < k, \\
	\displaystyle\binom{n-1}{n-k}\beta^{n-k} \text{ otherwise.}
\end{cases}
\end{align}
From this we get
\begin{equation}
	c_n = \sum_{i=k}^{n} \binom{i-1}{i-k}\beta^{i-k}
	= \sum_{i=0}^{n-k} \binom{i+k-1}{i}\beta^{i}
	= \sum_{i=0}^{n-k} \frac{\prod_{l=1}^{k-1}(i+l)}{(k-1)!}\beta^i,
\end{equation}
using the convention that the product reduces to one when there are no terms in it.
Identifying coefficients in the power series and rearranging terms yields for $k>0$
\begin{equation}\label{PBI-1bound}
\boxed{\text{(PBI-1)} :\quad
	e_{n+1}^{k} \leq
	\delta \frac{\alpha^k}{(k-1)!} \sum_{i=0}^{n-k}\prod_{l=1}^{k-1}(i+l)\beta^{i}.}
\end{equation}
Following an idea by Gander and Hairer~\cite{gander2008nonlinear}, we
can also consider the error recurrence
$e_{n+1}^{k+1} \leq \alpha e_{n}^k + \bar{\beta} e_{n}^{k+1}$, $
\bar{\beta} = \max(1, \beta)$.
Using the upper bound
$
	\sum_{n=0}^{\infty}\zeta^n = \frac{1}{1-\zeta} \leq \frac{1}{1-\bar{\beta}\zeta},
$
for the initial error, we avoid the series product and get
$
	\rho_{k}(\zeta) \leq \delta\alpha^k \frac{\zeta^k}{(1-\bar{\beta})^{k+1}}
$
as bound on the generating function.
We then obtain the simpler error bound
\begin{equation}
	e_{n+1}^k \leq \delta \frac{\alpha^k}{k!}
	\bar{\beta}^{n-k}\prod_{l=1}^{k}(n+1-l)
\end{equation}
as in the proof of \cite[Th.~1]{gander2008nonlinear}.
\paragraph{Error bound for PBI-2}
We use~\eqref{eq:seriesProduct} again with $b_n = 1$ to get
\begin{align}
a_n = \begin{cases}
\displaystyle\binom{k}{n}\left(\frac{\alpha}{\gamma}\right)^n \text{ if } n
\leq k, \\
0 \text{ otherwise.}
\end{cases}
\end{align}
From this we get
$
c_n = \sum_{i=0}^{\min(n,k)}\binom{k}{i}\left(\frac{\alpha}{\gamma}\right)^i,
$
which yields for $k>0$ the error bound
\begin{equation}
\boxed{\text{(PBI-2)} :\quad
	e_{n+1}^{k} \leq \begin{cases}
	\delta (\gamma + \alpha)^k \text{ if } k \leq n, \\
	\displaystyle \delta \gamma^k \sum_{i=0}^{n}
	\binom{k}{i}\left(\frac{\alpha}{\gamma}\right)^i \text{ otherwise.}
	\end{cases}
}
\end{equation}
\paragraph{Error bound for PBI-3}
We use~\eqref{eq:seriesProduct} with $b_n=1$ for the series product to get
\begin{align}
a_n = \binom{n+k-1}{n}\beta^n
	= \frac{\prod_{l=1}^{k-1}(n+l)}{(k-1)!}\beta^n,
\end{align}
which yields the error bound
\begin{equation}
\boxed{\text{(PBI-3)} :\quad
	e^k_{n+1} \leq \delta
	\frac{\gamma^k}{(k-1)!} \sum_{i=0}^{n}\prod_{l=1}^{k-1}(i+l)\beta^{i}}
\end{equation}
for $k>0$.

\subsection{Full Primary Block Iteration}
We now consider a primary block iteration~\eqref{eq:primaryBlockIteration} with all block operators
non-zero,
\begin{equation}
\text{(PBI-Full)} :\quad \uvect^{k+1}_{n+1} =
\BMat^0_1\left(\uvect^{k}_{n+1}\right)
+ \BMat^1_0\left(\uvect^{k+1}_{n}\right) + \BMat^0_0\left(\uvect^{k}_{n}\right),
\label{eq:pbifull}
\end{equation}
with $\alpha$, $\beta$ and $\gamma$ defined in \eqref{alphabetagammadef}.
Applying Lemma~\ref{lem:primary} leads to
\begin{equation}
\rho_{k+1}(\zeta)
\leq \frac{\gamma + \alpha \zeta}{1-\beta\zeta} \rho_{k}(\zeta)
\quad \Longrightarrow\quad \rho_{k}(\zeta)
\leq \left(\frac{\gamma + \alpha\zeta}{1-\beta\zeta}\right)^{k}\rho_{0}(\zeta).
\end{equation}
Combining the calculations performed for PBI-2 and PBI-3, we obtain
\begin{align}
\rho_{k}(\zeta) &\leq \delta \zeta \gamma^k
\left[\sum_{n=0}^{k} \binom{k}{n}\left(\frac{\alpha}{\gamma}\right)^n\zeta^{n}\right]
\left[\sum_{n=0}^{\infty} \binom{n+k-1}{n}\beta^n\zeta^{n}\right]
\left[\sum_{n=0}^{\infty}\zeta^{n}\right] \\
&= \delta \zeta \gamma^k
\left[\sum_{n=0}^{k} \binom{k}{n}\left(\frac{\alpha}{\gamma}\right)^n\zeta^{n}\right]
\left[\sum_{n=0}^{\infty} \sum_{i=0}^{n} \binom{i+k-1}{i}\beta^i \zeta^{n}\right].
\end{align}
Then using \eqref{eq:seriesProduct} with
\begin{equation}
a_n = \begin{cases}
\displaystyle\binom{k}{n}\left(\frac{\alpha}{\gamma}\right)^n \text{ if } n
\leq k, \\
0 \text{ otherwise,}
\end{cases}\quad
b_n = \sum_{i=0}^{n} \binom{i+k-1}{i}\beta^i,
\end{equation}
we obtain
\begin{equation}
\rho_{k}(\zeta) \leq \delta \zeta \gamma^k \sum_{n=0}^{\infty} c_n \zeta^n,
\ \mbox{with}\
c_n = \sum_{i=0}^{\min(n, k)} \sum_{l=0}^{n-i}
\binom{k}{i}\binom{l+k-1}{l}
\left(\frac{\alpha}{\gamma}\right)^i\beta^l.
\end{equation}
From this we can identify the error bound
\begin{equation}
\boxed{\text{(PBI-Full)} :\quad
	e_{n+1}^k \leq \delta \gamma^k \sum_{i=0}^{\min(n, k)} \sum_{l=0}^{n-i}
\binom{k}{i}\binom{l+k-1}{l}
\left(\frac{\alpha}{\gamma}\right)^i\beta^l.}
\end{equation}
\bibliographystyle{siamplain}
\bibliography{pint,references}
\end{document}